\newtheorem{thm}{Theorem}
\newtheorem{defn}{Definition}
\newtheorem{lemma}{Lemma}
\newtheorem{prop}{Proposition}
\newcommand{\Z}{\mathbb{Z}}
\newcommand{\D}{\mathbb{D}}
\newcommand{\R}{\mathbb{R}}
\newcommand{\N}{\mathbb{N}}
\begin{document}
\title[Purely sc spectrum for subshift CMV operators]{Purely singular continuous spectrum for CMV operators generated by subshifts
}
\author{Darren C. Ong}
\address{Department of Mathematics, Rice University, Houston, TX~77005, USA}

\email{darren.ong@rice.edu}
\begin{abstract}
We prove uniform absence of point spectrum for CMV operators corresponding to the period doubling subshift. We also prove almost sure absence of point spectrum for CMV operators corresponding to a class of Sturmian subshifts. Lastly, we prove almost sure absence of point spectrum for CMV operators corresponding to some subshifts generated by a coding of a rotation. 
\end{abstract}
\thanks{The author was supported in part by NSF grant DMS--1067988}
\maketitle
\begin{section}{Introduction}
We consider the extended CMV operator $\mathcal E$ on the space $\ell^2(\mathbb Z)$ of complex square-summable sequences. $\mathcal E$ is given by
\begin{equation*}\label{CMVmatrix}
\left(
\begin{array}{ccccccc}
\ldots&\ldots& \ldots&\ldots&\ldots&\ldots&\ldots\\
\ldots&-\overline{\alpha(0)}\alpha(-1)&\overline{ \alpha(1)}\rho(0)&\rho(1)\rho(0)&0&0&\ldots\\
\ldots&-\rho(0)\alpha(-1)&- \overline{\alpha(1)}\alpha(0)&-\rho(1)\alpha(0)&0&0&\ldots\\
\ldots&0& \overline{\alpha(2)}\rho(1)&-\overline{\alpha(2)}\alpha(1)&\overline{\alpha(3)}\rho(2)&\rho(3)\rho(2)&\ldots\\
\ldots&0& \rho(2)\rho(1)&-\rho(2)\alpha(1)&-\overline{\alpha(3)}\alpha(2)&-\rho(3)\alpha(2)&\ldots\\
\ldots&0& 0&0&\overline{\alpha(4)}\rho(3)&-\overline{\alpha(4)}\alpha(3)&\ldots\\
\ldots&\ldots& \ldots&\ldots&\ldots&\ldots&\ldots\\
\end{array}
\right),
\end{equation*}
where $\{\alpha(n)\}_{n\in\Z}$ is a sequence in $\mathbb D$, the open unit disk, and $\rho(n)=\sqrt{1-\vert\alpha(n)\vert^2}$. The $\alpha(n)$ are known as Verblunsky coefficients. 

The CMV operator was developed in \cite{CMV} in connection with the study of orthogonal polynomials of the unit circle. The $\ell^2(\mathbb N)$ version of this operator captures the recurrence relation of the orthogonal polynomials, and thus allowed us to apply spectral theoretic tools to the study of orthogonal polynomials on the unit circle, much in the same way that Jacobi operators allowed us to use spectral theoretic tools in the theory of orthogonal polynomials on the real line. The two-volume text \cite{Simon} extensively explores this point of view. 

Indeed, we can view the CMV operator as a unitary analogue of the Jacobi operator. There has been a lot of work done in discovering CMV analogues for theorems in the theory of Jacobi or Schr\"odinger operators, and vice versa.

There has been recent interest in studying the case where the extended CMV operator is generated by subshifts. That is, given an alphabet $\mathcal A=\{a,b\}$, we consider a word $u\in \mathcal A^\N$. The \em hull \em $\Omega$ generated by this substitution is the set of all two-sided sequences all of whose subblocks are also subblocks of $u$. The \em subshift dynamical system \em $(\Omega,\mathcal T)$ consists of the hull, the left shift $\mathcal T$ on $\Omega$, and the product topology on $\Omega$. For each $\omega\in \Omega$, we can then generate a CMV operator $\mathcal E_\omega$ by taking $\alpha_\omega(n)=f(\omega_n)$, where $\omega_n$ refers to the $n$th letter of $\omega$, and $f:\mathcal A\to \mathbb D$ is a nonconstant function.

For example, given an interval $I\subseteq[0,1)$, irrational $\theta$, $\beta\in[0,1)$ and $g:\{0,1\}\to\mathcal A$, a \em rotation coding \em sequence is a sequence in $n\in\N$ generated by $u=g(\chi_I(\theta n+\beta \mod 1 ))$. If $I=[1-\theta,1)$, then the sequences are called \em Sturmian sequences\em. Furthermore, if $\theta$ and $\beta$ are both the fractional part of the golden mean (that is, $\theta=\beta=(\sqrt{5}-1)/2$), then the sequence is called the \em Fibonacci sequence\em.

For these cases, it is often useful to write $\theta$ in terms of a continued fraction expansion. That is,

\[\theta=a_0+\frac{1}{a_1+\frac{1}{a_2+\ddots}}.\]
 We also define $p_n/q_n$ to be the $n$th rational approximation of $\theta$. Then the $q_n$ are integers that satisfy
\begin{equation}\label{qrecursion}
q_{n+1}=a_nq_n+q_{n-1},
\end{equation}
with initial conditions $q_0=0, q_1=1$.

Another important class of subshifts are the \em substitution sequence subshifts\em. We define a substitution as a semigroup homomorphism $S:\mathcal A^*\to\mathcal A^*$, where $\mathcal A^*$ is the semigroup of words generated by $\mathcal A$ under the concatenation operation.  If $w=c_1\ldots c_l$ with $c_j\in\mathcal A$, then $S(w)=S(c_1)\ldots S(c_l)$. We assume that there is a distinguished element $a$ in $\mathcal A$ so that $S(a)$ starts with $a$ and that there exists a $k$ so $b\in S^k(a)$ (here $S^k$ denotes the compostion of $S$ with itself $k$ times). We also assume that $\vert S(a)\vert$, the length of the word $S(a)$ is more than $1$. Thus $S(a), S^2(a), S^3(a),\ldots $ all start with $a$ and we have a limit $S^{\infty}(a)\in \mathcal A^{\mathbb Z_+}$. If we extend $S$ by concatenation to act on infinite words, $S^\infty(a)$ is invariant under the action of $S$. We take this $S^\infty(a)$ as our sequence $u$. Section 10.5A of \cite{Simon} provides a more thorough explanation of these substitution sequences for the interested reader.

We will enumerate a few important examples. A \em period doubling sequence \em is a substitution sequence corresponding to $S(a)=ab, S(b)=aa$. A \em Thue-Morse sequence \em is generated by $S(a)=ab$, $S(b)=ba$. Also, an alternative way to generate a Fibonacci sequence is through the substitution $S(a)=ab, S(b)=a$.

The extended CMV operator generated by subshifts of the Fibonacci sequence has been used to study the classical Ising model and to model certain types of quantum walks (refer to \cite{DMY2} for a discussion on the various applications).

We prove uniform absence of point spectrum for the CMV operator generated by all period doubling subshifts, almost sure absence of point spectrum for a large class of Sturmian subshifts which include the Fibonacci subshift, and almost sure absence of point spectrum for a large class of subshifts generated by coding of a rotation. Explicitly, we show

\begin{thm}\label{PDtheorem}Let $\Omega$ be the hull of the period doubling subshift. For every $\omega\in\Omega$, the operator $\mathcal E_\omega$ is purely singular continuous.
\end{thm}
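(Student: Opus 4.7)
My plan is to establish Theorem \ref{PDtheorem} by separately proving (i) uniform absence of eigenvalues for every $\omega\in\Omega$, and (ii) absence of absolutely continuous spectrum for every $\omega\in\Omega$. Combined with the standard fact that the spectrum of $\mathcal{E}_\omega$ is constant in $\omega$ and has no isolated points for an aperiodic minimal subshift, these two statements together force the spectral measures to be purely singular continuous.

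For (ii), I would invoke a Kotani-type theorem for CMV operators. Because the period doubling subshift is minimal and aperiodic and the Verblunsky coefficients $\alpha_\omega(n)=f(\omega_n)$ take only two distinct values, the generating process is deterministic in Kotani's sense. The CMV analogue of the Kotani--Last theorem then yields emptiness of the absolutely continuous spectrum for \emph{every} $\omega\in\Omega$, not merely for almost every $\omega$.

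The crux is (i). I would deploy a Gordon-type lemma adapted to CMV operators. In its usual form, such a lemma rules out $\ell^2(\Z)$-eigenfunctions whenever the Verblunsky coefficients exhibit arbitrarily long ``cube'' structures $www$ of total length $3n_k$ straddling the origin, via the standard $\max\{\|T^k\|:k=1,2,3\}\ge 1/2$ estimate for unimodular transfer matrices $T$ together with Cayley--Hamilton. To invoke it, I must produce, for each $\omega\in\Omega$, a sequence $n_k\to\infty$ and a word $w_k$ of length $n_k$ so that the subword of $\omega$ on a window of length $3n_k$ near the origin equals $w_kw_kw_k$.

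Verifying this combinatorial cube condition \emph{uniformly} over all $\omega\in\Omega$ is the main obstacle. I would exploit the hierarchical structure of the substitution $S(a)=ab$, $S(b)=aa$: every $\omega\in\Omega$ admits, for each $n$, a decomposition into $S^n$-blocks of length $2^n$, and since $S^n(a)$ and $S^n(b)$ agree except in their last two letters, three consecutive $S^n$-images generically concatenate into a cube of length essentially $3\cdot 2^n$. The difficulty is that a given $\omega$ may place the origin at an awkward phase of the hierarchical decomposition; one must perform a case analysis, most likely using recognizability of the period doubling substitution to track the origin's position within nested $S^n$-blocks, to guarantee cubes of length comparable to $2^n$ covering a neighborhood of the origin for infinitely many $n$, regardless of $\omega$. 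Once this combinatorial input is secured, the Gordon lemma precludes eigenvalues for every $\omega\in\Omega$, completing the proof.
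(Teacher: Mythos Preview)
Your plan has a genuine gap: the three-block (cube) Gordon lemma alone does \emph{not} suffice to exclude eigenvalues for every $\omega\in\Omega$. Your heuristic that ``three consecutive $S^n$-images generically concatenate into a cube'' is incorrect as stated. In fact $S^n(a)$ and $S^n(b)$ coincide except in their \emph{last} letter, so $S^n(c_1)S^n(c_2)S^n(c_3)$ is an honest cube only when $c_1=c_2=c_3$; and since the sequence of level-$n$ block types around the origin is again the period doubling sequence, there exist $\omega$ (for instance the two-sided extensions with $\omega|_{[1,\infty)}$ equal to the substitution fixed point) for which the block types immediately to the right of the origin read $a,b,\ldots$ at \emph{every} scale, so no aligned cube of length $2^n$ sits in the window $[1-2^n,\,2\cdot 2^n]$. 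The case analysis in \cite{Damanik.doubling} that the paper imports handles exactly these configurations by falling back on \emph{squares}, not cubes, and then invoking the two-block Gordon lemma (Proposition~\ref{2block}).

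But the two-block Gordon lemma needs a uniform bound on $|\mathrm{tr}\,M_{n_k}(z)|$ for $z$ in the spectrum, and supplying that bound is the main analytic work your outline omits. The paper obtains it by analyzing the trace-map recursion $x_{n+1}=x_ny_n-B$, $y_{n+1}=x_n^2-2$ (with $B=\mathrm{Tr}(M_{(b),1}M_{(a),1}^{-1})\ge 2$ from Lemma~\ref{Blemma}), showing that whenever both $|x_n|>B$ and $|x_{n+1}|>B$ the orbit escapes and $z$ lies in the resolvent set (Proposition~\ref{tracebounds}); hence on the spectrum one always has $|x_n(z)|\le B$ for one of every two consecutive indices, which is exactly the input the two-block lemma requires. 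Without this trace-map step, your argument cannot close the cases where only squares are available. (A minor further point: in the CMV setting the Gordon lemmas here require $n_k$ even, which you should also track, though for period doubling the natural scales $2^n$ make this harmless.)
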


\begin{thm}\label{Kaminaga0}
Consider a Sturmian subshift, and suppose $\theta$ is chosen so that there exists an $N$ such that $a_n=1$ for all $n>N$ (The Fibonacci subshift satisfies this condition). Then the CMV operator corresponding to a Sturmian subshift generated by the rotation number $\theta$ is purely singular continuous for Lebesgue almost every phase $\beta$.
 \end{thm}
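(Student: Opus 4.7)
The plan is to adapt the strategy Kaminaga used to prove almost sure absence of eigenvalues for Sturmian Schr\"odinger operators, translated into the CMV setting. Three ingredients are needed: a Gordon-type no-eigenvalue criterion for CMV operators, a combinatorial/measure-theoretic lemma producing Sturmian cubes near the origin for a.e.\ phase $\beta$, and knowledge that the absolutely continuous spectrum vanishes.

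For the first ingredient, I would formulate a CMV Gordon lemma in terms of the Szego transfer matrices. Assume the Verblunsky sequence admits, for infinitely many $n$, a three-fold block repetition: integers $q_n\to\infty$ and positions $k_n$ with $|k_n|/q_n\to 0$ such that $\alpha_\omega(k_n+j)=\alpha_\omega(k_n+j+q_n)=\alpha_\omega(k_n+j+2q_n)$ for $0\le j<q_n$. Letting $M=M(z)$ denote the Szego transfer matrix over one block (which is unimodular for $z\in\partial\mathbb D$), the Cayley--Hamilton identity applied to $M$ forces the inequality $\max(\|M^3 v\|,\|M^2 v\|,\|Mv\|,\|v\|)\geq \frac{1}{2}\|v\|$ uniformly in $v$. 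Applied to the candidate eigenfunction initial data near position $k_n$, this prevents $\ell^2$ decay on both tails, ruling out eigenvalues.

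For the second ingredient, I would exploit the Sturmian combinatorics associated with $\theta$ whose continued fraction coefficients are eventually $1$. In that case the Sturmian blocks of length $q_n$ satisfy the standard inductive relation, and the Ostrowski representation of $\beta$ controls exactly which factors appear at the origin. Concretely, $u_\beta$ contains a cube of length $q_n$ straddling the origin whenever $\beta$ falls into a return interval whose length is proportional to $\|q_n\theta\|$. Since the $a_n$ are eventually $1$, the sum $\sum_n\|q_n\theta\|$ diverges; a Borel--Cantelli argument then shows that for Lebesgue a.e.\ $\beta$ the cube condition is met infinitely often. Feeding the conclusion into the CMV Gordon lemma of ingredient one yields absence of point spectrum for a.e.\ $\beta$.

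For the third ingredient, absence of absolutely continuous spectrum for Sturmian CMV operators is already available in the subshift CMV literature (via either the minimality/aperiodicity of the subshift or zero Lebesgue measure of the spectrum), so only the point spectrum step is genuinely new. The main obstacle I expect is in calibrating the Gordon step: unlike the Schr\"odinger case, CMV transfer matrices alternate in form depending on the parity of the index, so the translation $\alpha(j)\mapsto \alpha(j+q_n)$ really controls the one-step transfer matrix only when $q_n$ has the correct parity (or after grouping pairs). Handling this parity obstruction, and locating $k_n$ correctly so that the repeated block aligns with the two-step CMV structure, will require some care; however, this is a local adjustment that should not change the overall $q_n\to\infty$ asymptotics nor the measure-theoretic step on $\beta$.
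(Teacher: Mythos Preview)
Your overall plan---adapt Kaminaga's three-block Gordon strategy to the CMV setting and invoke known absence of a.c.\ spectrum---is exactly the route the paper takes. But there is a genuine gap in your first ingredient, and it is precisely the technical point the paper is built around. You propose to run the Gordon argument with Szeg\H{o} transfer matrices. The Szeg\H{o} recursion propagates the orthogonal polynomials $\varphi_n$, not solutions $u$ of the two-sided eigenvalue equation $\mathcal E u = zu$; for the extended operator $\mathcal E$ there are no associated orthogonal polynomials at all, so a Cayley--Hamilton inequality for Szeg\H{o} blocks says nothing about decay of a would-be $\ell^2$ eigenfunction. The paper's remedy is to use the Gesztesy--Zinchenko transfer matrices $T_n(z)$, which act on pairs $(u_n,v_n)$ with $u$ a genuine solution of $\mathcal E u=zu$ and $v$ a companion sequence satisfying $\mathcal M u=zv$; the Gordon lemma is then proved for these matrices, and one checks separately that $v$ cannot be large while $u$ decays.

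The parity issue you flag is not a cosmetic local adjustment: because $T_n$ alternates form with the parity of $n$, the repetition $\alpha(j)=\alpha(j+q_n)$ forces $T_j=T_{j+q_n}$ only when $q_n$ is \emph{even}. The paper resolves this by restricting throughout to the infinite set $\tilde{\mathbb N}=\{n:q_n\text{ even}\}$; when $a_n=1$ eventually, the Fibonacci-type parity pattern guarantees $\tilde{\mathbb N}$ is infinite. Finally, for the measure step the paper does not use Borel--Cantelli (whose divergence direction would need extra structure). It instead shows directly that $\mu(E(n))\ge 1-2(q_n+1)\lvert q_n\theta-p_n\rvert$, uses $\limsup_{n\in\tilde{\mathbb N}} q_n\lvert q_n\theta-p_n\rvert=1/\sqrt{5}$ to get $\limsup_{n\in\tilde{\mathbb N}}\mu(E(n))>0$, applies Fatou to get $\mu(\limsup E(n))>0$, and then upgrades positive measure to full measure by ergodicity. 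Your Borel--Cantelli sketch would need more justification, whereas the Fatou-plus-ergodicity route is both simpler and complete.
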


For the following two theorems, we note that the existence of $q_n$ even for arbitrarily large $n$ is equivalent to the existence of $a_m$ odd for arbitrarily large $m$.

 \begin{thm}\label{Kaminaga1}
 Consider a Sturmian subshift, and suppose $\theta$ is chosen so that there exists an infinite subset $\tilde {\mathbb N}$ of $\mathbb N$ so that $q_n$ is even for $n\in\tilde{\mathbb N}$. Assume further that $\limsup_{n\in \tilde N} a_n\geq 3$. Then the CMV operator corresponding to a Sturmian subshift generated by the rotation number $\theta$ is purely singular continuous for Lebesgue almost every phase $\beta$.
 \end{thm}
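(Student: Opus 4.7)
The plan is to follow the template used for Theorem \ref{Kaminaga0}: absence of absolutely continuous spectrum will follow from zero Lebesgue measure of the spectrum (a known consequence of the Sturmian/subshift structure), so the main content is to prove absence of point spectrum for Lebesgue-a.e.\ $\beta$ via a Gordon-type argument adapted to extended CMV operators.

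I would first formulate a CMV Gordon lemma: if there exist lengths $q_n\to\infty$ which are all \emph{even}, together with starting positions $k_n$ bounded in a suitable sense, such that
\[
\alpha_\omega(k_n+j)=\alpha_\omega(k_n+q_n+j)=\alpha_\omega(k_n+2q_n+j),\qquad 0\le j<q_n,
\]
then no nonzero $\ell^2$ solution of $\mathcal E_\omega u=zu$ exists. The evenness condition is the key distinction from the Schr\"odinger setting: the Gesztesy--Zinchenko transfer matrices alternate between two forms $L$ and $M$ depending on site parity, so only when the repeated block has even length do three consecutive translates of the block contribute identical transfer matrix factors $T$. Once this is secured, the standard Cayley--Hamilton argument on unimodular $2\times 2$ matrices produces the usual norm lower bound on at least one of $T^{q_n}v$, $T^{2q_n}v$, $T^{3q_n}v$, precluding $\ell^2$ decay.

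Next I would supply the combinatorial input. The canonical prefixes $s_n$ of length $q_n$ of a Sturmian sequence satisfy the concatenation recursion $s_{n+1}=s_n^{a_{n+1}}s_{n-1}$, so the cube $s_n^3$ occurs in the subshift whenever $a_{n+1}\ge 3$. The hypothesis that $q_n$ is even along $\tilde{\mathbb N}$ together with $\limsup_{n\in\tilde{\mathbb N}}a_n\ge 3$ then yields infinitely many cubes of even length $3q_n$ in $u$, matching the parity requirement above. To control where these cubes sit relative to the origin as $\beta$ varies, I would run a Fubini/Borel--Cantelli argument on the phase space: for each admissible $n$, the set of $\beta\in[0,1)$ for which the relevant cube of $u^{(\beta)}$ occurs at distance more than, say, $q_n$ from the origin has measure decaying fast enough along $\tilde{\mathbb N}$ to sum, producing for Lebesgue-a.e.\ $\beta$ an infinite sequence of even-length cubes close enough to $0$ to invoke the Gordon lemma.

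The main obstacle is the parity alignment driving step one. The classical Gordon/Kaminaga machinery treats the transfer matrix as a single object; for CMV one instead has an alternating product $\cdots ML\,ML\cdots$, so translating by an odd-length block interchanges the two factors and destroys the commutation needed for the norm estimate. The condition on $\tilde{\mathbb N}$ together with $\limsup a_n\ge 3$ is imposed precisely to ensure that the Sturmian combinatorics and the arithmetic of the continued fraction expansion conspire to produce infinitely many cubes of the correct (even) length; verifying that the Fubini step remains valid under this restricted subsequence, rather than along all of $\mathbb N$, will require the bulk of the technical care.
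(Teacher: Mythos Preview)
Your overall architecture is right: rule out a.c.\ spectrum via zero-measure spectrum, then eliminate eigenvalues using a three-block Gordon lemma for the Gesztesy--Zinchenko cocycle, with the evenness of $q_n$ along $\tilde{\mathbb N}$ ensuring that translation by $q_n$ preserves the parity pattern of the alternating transfer matrices. That part matches the paper exactly.

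The gap is in how you pass from the combinatorics to an almost-sure statement in $\beta$. You propose a Borel--Cantelli argument, asserting that the set of phases for which the relevant cube fails to sit near the origin has measure ``decaying fast enough along $\tilde{\mathbb N}$ to sum.'' This is false under the stated hypothesis. With the paper's indexing $q_{n+1}=a_nq_n+q_{n-1}$, the measure of phases $\beta$ for which the three-block alignment at scale $q_n$ holds satisfies
\[
\mu(E(n))\ \geq\ 1-\frac{2q_n+2}{q_{n+1}}\ \approx\ 1-\frac{2}{a_n},
\]
so the complementary (bad) set has measure roughly $2/a_n$. The hypothesis only guarantees $\limsup_{n\in\tilde{\mathbb N}}a_n\geq 3$; in particular it allows $a_n=3$ along the entire subsequence, in which case the bad sets have measure bounded below by a positive constant and certainly do not sum. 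Borel--Cantelli cannot close the argument.

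What the paper does instead---and what you are missing---is a two-step upgrade. First, from $\limsup_{n\in\tilde{\mathbb N}}\mu(E(n))>0$ one gets $\mu\bigl(\limsup_{n\in\tilde{\mathbb N}}E(n)\bigr)>0$ by the reverse Fatou inequality; this yields empty point spectrum only for a \emph{positive}-measure set of $\beta$. Second, one invokes the general fact (ergodicity of the rotation, plus almost-sure constancy of the spectral type for ergodic CMV operators) that absence of point spectrum on a set of positive measure forces it on a set of full measure. Without this ergodicity step there is no way to reach ``Lebesgue almost every $\beta$'' from the available measure bound. You should replace the Borel--Cantelli step with this Fatou-plus-ergodicity mechanism; the rest of your outline then goes through.
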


\begin{thm}\label{Kaminaga2}
 Consider a rotation coding subshift, where $I$ is an interval in $\R/\Z$. Suppose also that $\theta$ is chosen such that there exists an infinite subset $\tilde {\mathbb N}$ of $\mathbb N$ so that $q_n$ is even for $n\in\tilde{\mathbb N}$. Assume further that $\limsup_{n\in\tilde{\mathbb N}} a_n\geq 5$. Then the spectrum of the CMV operator corresponding to a subshift generated by the coding of the rotation $\theta$ is purely singular continuous for Lebesgue almost every  phase $\beta$.
 \end{thm}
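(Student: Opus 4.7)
The plan is to adapt the combinatorial scheme used by Kaminaga for coded rotations to the CMV setting, combined with a Gordon-type lemma tailored to the two-periodic block structure of $\mathcal E_\omega$. Absence of absolutely continuous spectrum should again follow from the fact that the coefficient sequence is generated by a minimal aperiodic subshift (a Kotani-type result for CMV operators ensures this), so the substantive task is ruling out eigenvalues for Lebesgue almost every phase $\beta$.

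First I would invoke, or prove, a CMV Gordon lemma of the following shape: if for the fixed $\omega$ there exist infinitely many $n$ and blocks $W_n$ of \emph{even} length $L_n\to\infty$ such that the Verblunsky coefficients $\alpha_\omega$ contain three consecutive copies of $W_n$ straddling the origin symmetrically, then $\mathcal E_\omega$ has no eigenvalues. The requirement that $L_n$ be even is where the CMV analysis differs from the standard Schr\"odinger Gordon argument: the one-step CMV transfer matrices come in two alternating flavors, so only repetitions whose period matches this two-periodicity can be exploited to push an $\ell^2$ eigenvector to infinity without decay. This is precisely the source of the hypothesis ``$q_n$ even for $n\in\tilde{\mathbb N}$'' in the statement.

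Next I would run the Kaminaga combinatorial step for a general rotation coding. For an arbitrary interval $I\subseteq\R/\Z$, the rotation-coded sequence has a hierarchical decomposition into canonical blocks of lengths $q_n$ controlled by the continued fraction expansion of $\theta$, but unlike the Sturmian case (where there is essentially one canonical block per level), the general coding admits two types of level-$n$ return words, because the orbit can cross either endpoint of $I$. The goal is to show that whenever $a_{n+1}\geq 5$, for a set of $\beta$ of positive measure the orbit $\{n\theta+\beta\bmod 1\}$ is positioned so that three identical consecutive copies of a single canonical level-$n$ word appear symmetrically about $0$. The bound $5$ (versus $3$ in the Sturmian Theorem~\ref{Kaminaga1}) is exactly the cost of absorbing these two return-word types: two of the $a_{n+1}$ copies are ``spent'' ensuring that the three we center around the origin are of the same combinatorial type. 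A Borel--Cantelli argument on $\beta$ then upgrades ``positive measure for each $n$'' to ``full measure for some $n\in\tilde{\mathbb N}$ arbitrarily large.''

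Finally, combining the two ingredients yields the theorem: the hypothesis that $q_n$ is even for $n\in\tilde{\mathbb N}$ supplies the parity condition demanded by the CMV Gordon lemma, while $\limsup_{n\in\tilde{\mathbb N}} a_n\geq 5$ supplies the three symmetric repetitions, and together these force the absence of point spectrum for a.e.\ $\beta$. The main obstacle is the combinatorial step in the third paragraph: verifying the positive-measure placement of three identical (rather than merely conjugate) canonical blocks around the origin, while simultaneously keeping the block length even. All other steps are either standard (Kotani for ac spectrum), or straightforward adaptations of existing CMV techniques (the Gordon lemma) that have already been used in Theorems~\ref{PDtheorem}--\ref{Kaminaga1} above.
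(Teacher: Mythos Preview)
Your proposal is correct and follows the same overall architecture as the paper: CMV three-block Gordon lemma (with the even-period requirement), Kaminaga's phase-combinatorics to produce the required repetitions, and Kotani-type absence of absolutely continuous spectrum from \cite{DL_Szego}. Two points of execution differ from the paper and are worth flagging. First, the paper does not invoke a hierarchical return-word decomposition; it runs a direct measure calculation. Writing $\beta_1,\beta_2$ for the endpoints of $I$, one defines $E_i(n)=\{\beta:\min_{1\le j\le q_n}|\Phi(j\theta)+\beta-\beta_i|_1>|q_n\theta-p_n|\}$ and checks $E_1(n)\cap E_2(n)\subset E(n)$, whence $\mu(E(n))\ge 1-4q_n/q_{n+1}$. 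The factor $4$ (versus roughly $2$ in the Sturmian case) comes simply from having two arbitrary endpoints to avoid rather than the special pair $\{0,1-\theta\}$; this is the actual source of the threshold $a_n\ge 5$, not a ``same combinatorial type'' bookkeeping on return words. Second, the passage from ``positive measure for infinitely many $n\in\tilde{\mathbb N}$'' to ``almost every $\beta$'' is not Borel--Cantelli: one uses reverse Fatou to get $\mu(\limsup_{n\in\tilde{\mathbb N}} E(n))\ge\limsup_{n\in\tilde{\mathbb N}}\mu(E(n))>0$, and then the ergodicity statement (Theorem 10.16.1 of \cite{Simon}, playing the role of Kaminaga's Lemmas 1--2) upgrades positive measure to full measure.
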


These theorems are CMV analogues of theorems in \cite{Damanik.doubling} and \cite{Kaminaga}, which address these results in the setting of the discrete Schr\"odinger operator (that is, where the potential sequence of the Schr\"odinger operator is generated by these subshifts). Previously, the Schr\"odinger operator corresponding to the Fibonacci subshift was studied in \cite{Suto87} and \cite{Suto89}, and these arguments were then extended to the general Sturmian case in \cite{BIST}. The case for the period doubling Hamiltonian was addressed extensively in \cite{BBG}. The tools that we apply in the present paper were adapted from the ideas developed in these earlier works.

In general, Schr\"odinger operators generated by these subshifts have received a lot of attention. \cite{Suto95}, \cite{Damanik.subshiftexpo} and \cite{Damanik-Embree-Gorodetski} are three extensive surveys of the many results in this area. Much of the impetus behind this research is the fact that these subshifts have been accepted as one-dimensional models of quasicrystals (quasicrystals are structures that are ordered and yet aperiodic; their discovery by Dan Shechtman won him the 2011 Nobel Prize in Chemistry), and so the corresponding Schr\"odinger operators provided insights into quasicrystalline properties. The reason the Sturmian and period doubling sequences attracted interest in this way is that (in a certain precise sense) they are among the simplest possible aperiodic sequences. An extensive discussion of this point of view can be found in Section 2 of \cite{Damanik-Embree-Gorodetski}.

The theory in the Schr\"odinger context is more well-developed than in the CMV context. We will point out in particular \cite{DKL}, which proves uniformly purely singular continuous spectrum in the setting of Schr\"odinger operators generated by Sturmian subshifts, without the additional hypotheses in \cite{Kaminaga} or in our Theorem \ref{Kaminaga1}.
 
In addition, there has been very detailed understanding of the gaps in the spectrum for several classes of these subshift Schr\"odinger operators, as well as a lot of work on bounding the Hausdorff dimension of the associated spectral measures.

There has also been work applying some of these ideas (particularly with regard to the Fibonacci subshift) to the more general Jacobi settting. Examples of recent results in this area can be found in \cite{Damanik-Gorodetski}, \cite{Dahl} and \cite{Yessen.tridiagonal},
 
Let us now discuss known results in the CMV setting. Section 10.5A of \cite{Simon} surveys some of the developments and introduces \em Simon's subshift conjecture\em, which states that CMV operators corresponding to minimal, aperiodic subshifts had zero measure spectrum. This had been verified in \cite{DL_Szego} for a broad class of aperiodic subshifts that include the Sturmian, period doubling and rotation coding subshifts. Ultimately, the full conjecture was proven false in \cite{Avila-Damanik-Zhang}.

As for previous results concerning point spectrum, \cite{Ong-QP} proved generic absence of point spectrum for quasiperiodic shifts on a $d$-dimensional torus, and \cite{Ong-LP} proved generic absence of point spectrum for limt-periodic Verblunsky coefficients, which are generated by translations on a Cantor group (that is, a totally disconnected compact Abelian topological group with no isolated points). 

The Fibonacci case was studied extensively in \cite{DMY1} and \cite{DMY2}, in which the authors prove that the spectrum is a zero-measure Cantor set and prove results concerning the Hausdorff dimension of this set, and the nature of spectral gaps. They also prove absence of embedded point spectrum within the essential continuous spectrum, whereas we prove almost sure absence of point spectrum everywhere. They also discuss several applications of this result to quantum walks and Ising models (the connection between CMV matrices and quantum walks was first introduced in \cite{CMVG}). Beyond that, there has been little work done on these subshift CMV operators, which contrasts with the vast literature in the Schr\"odinger setting. 

A fundamental tool in proving a lot of the previously mentioned results are the \em transfer matrices \em related to the Schr\"odinger or CMV operators. The transfer matrices are a sequence of $2\times 2$ matrices that control the growth in the solutions of the eigenvalue equations of the Schr\"odinger or CMV operators, and in particular understanding the traces of the transfer matrices reveals a lot about the spectral measures of the corresponding operator. 

As an explicit example, we can write the time-independent one-dimensional discrete Schr\"odinger equation as 
\begin{equation}\label{SchrodingerEq}
u_{n+1}+V(n)u_n+u_{n-1}=Eu_n,
\end{equation}
where $E$, the energy, is a real constant.
This equation can also be expressed in the form

\[
\begin{pmatrix}
u_{n+1}\\
u_n
\end{pmatrix}
=
\begin{pmatrix}
E-V(n) & -1\\
1& 0
\end{pmatrix}
\begin{pmatrix}
u_{n}\\
u_{n-1}
\end{pmatrix}.
\]

We refer to the $2\times 2$ matrix in the previous equation as the \em Schr\"odinger transfer matrix \em. Notice that where $H$ is the Schr\"odinger operator with potential $V(n)$, then $u$ solves the eigenvalue equation $Hu=Eu$, and clearly the traces of the transfer matrices are related to the growth of the solutions $u$. 

In the CMV setting, the analogue of (\ref{SchrodingerEq}) is known as the \em Szeg\H o recurrence\em. Given $z\in\D$ and $n\in\mathbb N$,

\[ \rho(n)\varphi_{n+1}(z)=z\varphi_n(z)-\overline{\alpha(n)}\overline{\varphi_n(1/\overline z)}z^n,\]

which can be expressed in transfer matrix form as

\[
\begin{pmatrix}
\varphi_{n+1}(z)\\
\overline{\varphi_{n+1}(1/\overline z)}z^{n+1}
\end{pmatrix}
=
\rho(n)^{-1}
\begin{pmatrix}
z & -\overline{\alpha(n)}\\
- \alpha(n)z &1 
\end{pmatrix}
\begin{pmatrix}
\varphi_{n}(z)\\
\overline{\varphi_{n}(1/\overline z)}z^{n}
\end{pmatrix}.
\]
The $2\times 2$ matrix in this equation is known as the \em Szeg\H o transfer matrix\em. Here the $\varphi$ are the sequence of orthogonal polynomials on the unit circle associated with the one-sided infinite CMV operator $\mathcal C$. The polynomials are closely related to the solutions $u$ of $\mathcal C u=zu$, but it is not true that $u=\varphi$. In particular, if we consider  instead $\mathcal E$, the two-sided infinite CMV operator, we don't have associated orthogonal polynomials at all.

One key innovation in this paper is that we use transfer matrices introduced in \cite{GZ} (which we refer to as Gesztesy-Zinchenko transfer matrices), rather than the Szeg\H o transfer matrices that are more commonly used in the CMV setting. These Gesztesy-Zinchenko transfer matrices take the form

\[\frac{1}{\rho}\begin{pmatrix}
z&-\overline{\alpha}\\
-\alpha z&1
\end{pmatrix}.
\]

Szeg\H o transfer matrices do not directly allow us to understand the point spectrum of $\mathcal E$, whereas understanding the Gesztesy-Zinchenko transfer matrices do. We thus prove trace bounds, and develop a Gordon lemma and a Floquet theory for these Gesztesy-Zinchenko transfer matrices, and this enables us to apply the combinatorial arguments from \cite{Damanik.doubling} and \cite{Kaminaga}.

\begin{section}{Acknowledgements}
I wish to thank David Damanik and Paul Munger for helpful conversations. I also wish to thank the anonymous referees for many useful suggestions, corrections and comments.
\end{section}
\end{section}
\begin{section}{The CMV Gordon Lemma}
In this section, we describe our central tool, the Gordon lemma for CMV operators that appears in \cite{Ong-LP}. The Gordon lemma was initially developed for Schr\"odinger operators in \cite{Gordon}. In essence, the lemma exploits the existence of local repetitions to prove absence of point spectrum. A good survey of the many variants of the technique appears in \cite{Damanik.Gordon}. 

We modify the ideas of those papers to better suit our purposes. In particular, we use instead transfer matrices that are introduced in \cite{GZ}.

Specify $z\in\partial\D$. For integer $n$, let us first introduce the transfer matrix $T_n$ described in (2.18) of \cite{GZ}. It is defined by

\begin{equation}
T_n(z)=\begin{cases}
\frac{1}{\rho(n)}\begin{pmatrix}
-\overline{\alpha(n)} &z\\
1/z& -\alpha(n)
\end{pmatrix}, \text{ $n$ odd,}
\\
\frac{1}{\rho(n)}\begin{pmatrix}
-{\alpha(n)} &1\\
1& -\overline{\alpha(n)}
\end{pmatrix}, \text{ $n$ even.}
\end{cases}
\end{equation}

Note that the notation in \cite{GZ} differs slightly from ours. In particular, their $\alpha_n$ is our $-\overline{\alpha(n)}$.

Let us also define
\[\Theta_{n}=\begin{pmatrix}
\overline{\alpha(n)}&\rho(n)\\
\rho(n)&-\alpha(n)
\end{pmatrix}.
\]

For a solution $u$ to the CMV eigenvalue equation $\mathcal E u=zu$, there is a corresponding solution $v$ to the eigenvalue equation $\mathcal E^T v=zv$ that satisfies $\mathcal Mu =zv$, where $\mathcal M:\ell^2(\Z)\to\ell^2(\Z)$ is the direct sum of $2\times 2$ matrices of the form $\Theta_{2k-1}$ with $k$ running from $-\infty$ to $+\infty$. The existence of such $v$ is proven in Lemma 2.2 of \cite{GZ}.

The transfer matrices satisfy
\begin{equation}
\begin{pmatrix}
u_n(z)\\
v_n(z)
\end{pmatrix}
=
T_n(z)
\begin{pmatrix}
u_{n-1}(z)\\
v_{n-1}(z)
\end{pmatrix}
.
\end{equation}
Note that the $T_n$ have determinant $-1$. We also define 
\begin{equation}
M_n(z)= \begin{cases}
  T_n(z)\ldots T_1(z), n\geq 1,\\
  I,  n=0,\\
T_{n+1}(z)^{-1}\ldots T_0(z)^{-1}, n\leq -1.
\end{cases}
\end{equation}

The following version of the CMV Gordon lemma is adapted from the proof in the Schr\"odinger context given in Section 3 of \cite{Damanik.Gordon}. It differs from the version in \cite{Ong-LP}, which is based on the Gordon lemma in Section 10.2 of \cite{Cycon}.

\begin{prop}[Two-block CMV Gordon lemma]\label{2block}Fix a sequence of Verblunsky coefficients $\alpha$ and fix $z\in \partial \D$. Suppose there is an even whole number sequence $n_k\to\infty$ and a constant $C\geq 1$ so that for every $k$

\begin{equation}\alpha(j)=\alpha(j+n_k), 1\leq j\leq n_k \label{2blockcondition},
\end{equation}
 and also $\vert \mathrm{tr}M_{n_k}(z)\vert\leq C.$

Then $z$ is not an eigenvalue for $\mathcal E$.
\end{prop}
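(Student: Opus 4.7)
The plan is to argue by contradiction, following the spirit of the classical Gordon lemma. Suppose $z$ were an eigenvalue of $\mathcal E$ with a nontrivial eigenvector $u\in\ell^2(\Z)$. First I would manufacture a companion sequence $v$ via the relation $\mathcal M u=zv$ provided by Lemma~2.2 of \cite{GZ}. Since $\mathcal M$ is a bounded block-diagonal operator whose $2\times 2$ blocks $\Theta_{2k-1}$ each have determinant $-|\alpha(2k-1)|^2-\rho(2k-1)^2=-1$, it is invertible with bounded inverse, so $v\in\ell^2(\Z)$, and in particular $u_n,v_n\to 0$ at $\pm\infty$. Writing $w_n:=(u_n,v_n)^\top$, the transfer recursion gives $w_{n_k}=M_{n_k}(z)w_0$ and $w_{2n_k}=M_{2n_k}(z)w_0$.

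The key structural step is the factorization $M_{2n_k}(z)=M_{n_k}(z)^2$. Because $n_k$ is even, the integers $j$ and $j+n_k$ share the same parity, so the piecewise definition of $T_n$ picks out the same matrix template at both indices; combined with the hypothesis $\alpha(j)=\alpha(j+n_k)$, this forces $T_{j+n_k}(z)=T_j(z)$ for $1\leq j\leq n_k$. Consequently
\[
M_{2n_k}(z)=\bigl(T_{2n_k}\cdots T_{n_k+1}\bigr)\bigl(T_{n_k}\cdots T_1\bigr)=M_{n_k}(z)\cdot M_{n_k}(z).
\]
Moreover $\det M_{n_k}(z)=(-1)^{n_k}=1$ since each $\det T_n=-1$ and $n_k$ is even.

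Setting $A:=M_{n_k}(z)$, the relation $\det A=1$ together with Cayley--Hamilton gives $A^2-(\mathrm{tr}\,A)A+I=0$, hence for every $w\in\C^2$,
\[
\|w\|\leq |\mathrm{tr}\,A|\cdot\|Aw\|+\|A^2w\|\leq (C+1)\max\bigl(\|Aw\|,\|A^2w\|\bigr).
\]
Applied to $w_0$, this yields $\max(\|w_{n_k}\|,\|w_{2n_k}\|)\geq \|w_0\|/(C+1)$. If $w_0=0$, then invertibility of each $T_n$ (all have determinant $-1$) propagates $w_n\equiv 0$ and thus $u\equiv 0$, contrary to nontriviality; so $\|w_0\|>0$. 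Letting $k\to\infty$, the left-hand side tends to zero by $\ell^2$ decay of $w$, contradicting the strictly positive lower bound. Hence $z$ cannot be an eigenvalue.

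I expect the main obstacle to lie less in the Cayley--Hamilton trace estimate, which is standard Gordon, than in the bookkeeping needed to fit the argument into the Gesztesy--Zinchenko framework: one must verify that the parity-dependent form of $T_n$ aligns correctly under translation by $n_k$ (the precise reason evenness of $n_k$ appears in the hypothesis), and one must promote the $\ell^2$ decay of $u$ to joint decay of the pair $(u_n,v_n)$ so that the two-vector Gordon estimate actually produces a contradiction for the two-sided CMV operator.
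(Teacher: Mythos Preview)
Your argument is correct and follows essentially the same route as the paper: Cayley--Hamilton applied to $M_{n_k}(z)$, the factorization $M_{2n_k}=M_{n_k}^2$ coming from the periodicity hypothesis and the evenness of $n_k$, and a resulting uniform lower bound on $\max(\|w_{n_k}\|,\|w_{2n_k}\|)$ that contradicts decay. The only cosmetic differences are that the paper records the constant as $\tfrac12\min(1,1/|\mathrm{tr}\,M_{n_k}|)$ rather than $1/(C+1)$, and that where you deduce $v\in\ell^2$ directly from bounded invertibility of $\mathcal M$, the paper instead argues pointwise that the $\Theta_{2k-1}$ blocks (determinant $-1$, uniformly bounded entries) forbid $v$ from staying large while $u$ decays; your formulation is arguably cleaner since the blocks are in fact unitary.
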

\begin{prop}[Three-block CMV Gordon lemma]\label{3block}Fix a sequence of Verblunsky coefficients $\alpha$ and fix $z\in \partial \D$. Suppose there is an even whole number sequence $n_k\to\infty$ so that for every $k$

 \begin{equation}\label{3blockcondition}
 \alpha(j-n_k)=\alpha(j)=\alpha(j+n_k), 1\leq j\leq n_k.
 \end{equation}

Then $z$ is not an eigenvalue for $\mathcal E$. 
\end{prop}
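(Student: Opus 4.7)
The plan is to suppose for contradiction that $u$ is a nontrivial $\ell^{2}$ eigenfunction of $\mathcal{E}$ with eigenvalue $z$, and to exhibit a subsequence of sites at which $\|(u_{n},v_{n})\|$ is bounded below by a positive constant, contradicting $\ell^{2}$-decay. Here $v$ is the companion sequence guaranteed by Lemma 2.2 of \cite{GZ}; because $\mathcal{M}$ is a direct sum of unitary blocks $\Theta_{2k-1}$, it is itself unitary, so $v\in\ell^{2}(\mathbb{Z})$ with $\|v\|=\|u\|$. Since the matrices $T_{n}(z)$ are invertible, nontriviality of $u$ forces $(u_{n},v_{n})\neq 0$ for every $n$.

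First I would translate the three-block hypothesis \eqref{3blockcondition} into the periodicity $T_{j\pm n_{k}}(z)=T_{j}(z)$ of the transfer matrices for $1\leq j\leq n_{k}$. Setting $A_{k}:=M_{n_{k}}(z)$, telescoping the defining products of $M_{n}(z)$ using this periodicity yields
\[ M_{2n_{k}}(z)=A_{k}^{2},\qquad M_{-n_{k}}(z)=A_{k}^{-1}. \]
Since each $T_{n}(z)$ has determinant $-1$ and $n_{k}$ is even, $\det A_{k}=(-1)^{n_{k}}=1$, so Cayley--Hamilton for $2\times 2$ matrices gives
\[ A_{k}^{2}=\mathrm{tr}(A_{k})\,A_{k}-I,\qquad A_{k}^{-1}=\mathrm{tr}(A_{k})\,I-A_{k}. \]

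The crux is the algebraic lower bound
\[ \max\bigl(\|A_{k}^{-1}w\|,\,\|A_{k}w\|,\,\|A_{k}^{2}w\|\bigr)\geq \tfrac{1}{2}\|w\|\quad\text{for every }w\in\mathbb{C}^{2}. \]
Indeed, if all three quantities were strictly less than $\tfrac{1}{2}\|w\|$, applying the two Cayley--Hamilton identities to $w$ would force $|\mathrm{tr}(A_{k})|\,\|w\|\leq\|A_{k}^{-1}w\|+\|A_{k}w\|<\|w\|$, hence $|\mathrm{tr}(A_{k})|<1$, and then $\|w\|\leq\|A_{k}^{2}w\|+|\mathrm{tr}(A_{k})|\,\|A_{k}w\|<\tfrac{1}{2}\|w\|+\tfrac{1}{2}\|w\|=\|w\|$, a contradiction. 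Applying this to $w=(u_{0},v_{0})^{T}$ and invoking the transfer relation produces, for each $k$, an index $m_{k}\in\{-n_{k},n_{k},2n_{k}\}$ with $\|(u_{m_{k}},v_{m_{k}})\|\geq\tfrac{1}{2}\|(u_{0},v_{0})\|>0$, which (since $n_{k}\to\infty$) contradicts $u,v\in\ell^{2}(\mathbb{Z})$.

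The main obstacle is the bookkeeping behind the identities $M_{2n_{k}}=A_{k}^{2}$ and $M_{-n_{k}}=A_{k}^{-1}$: one must simultaneously keep straight the parity-dependent form of $T_{n}(z)$ and the piecewise definition of $M_{n}(z)$ for positive versus negative $n$. In particular, the hypothesis that $n_{k}$ is even is used twice---once so that the shifts $j\mapsto j\pm n_{k}$ preserve the odd/even block structure of the $T_{j}$, and once to ensure $\det A_{k}=1$ for the Cayley--Hamilton step. Once these identities are in hand, the rest of the argument is short and directly parallel to the three-block Gordon lemma in Section 3 of \cite{Damanik.Gordon}.
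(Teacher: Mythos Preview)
Your argument is correct and follows essentially the same route as the paper: translate the three-block hypothesis into $M_{2n_k}=A_k^{2}$ and $M_{-n_k}=A_k^{-1}$, invoke Cayley--Hamilton for the unimodular matrix $A_k$, and conclude that $(u,v)$ cannot decay. The only cosmetic differences are that the paper organises the dichotomy as a case split on $|\mathrm{tr}\,A_k|$ (deferring to the two-block lemma when the trace is bounded) rather than your unified three-way inequality, and that the paper rules out ``$v$ large while $u$ decays'' via the block structure of $\mathcal{M}$ rather than your cleaner observation that $\mathcal{M}$ is unitary so $v\in\ell^{2}$ automatically.
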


\begin{proof}[Proof of Proposition \ref{2block}]
By the Cayley-Hamilton theorem, for any even $n\geq 2$ we have the following matrix equation:
\[M_{n}(z)^2-\mathrm{tr}(M_{n}(z))M_{n}(z)+I=0.\]
 Note that the conditions we have on the Verblunsky coefficients imply that \[M_{n_k}(z)^2=M_{2n_k}(z).\] Thus we have for any $k$
\[ \begin{pmatrix}u_{2n_k}\\
v_{2n_k}
\end{pmatrix}-\mathrm{tr}(M_{n_k}(z))\begin{pmatrix}u_{n_k}\\
v_{n_k}
\end{pmatrix}+\begin{pmatrix}u_{0}\\
v_{0}
\end{pmatrix}=0.\]

Let us assume without loss of generality that $\vert u_0\vert^2+\vert v_0\vert ^2=1$. This implies that one of

\[ \begin{pmatrix}u_{2n_k}\\
v_{2n_k}
\end{pmatrix},\mathrm{tr}(M_{n_k}(z))\begin{pmatrix}u_{n_k}\\
v_{n_k}
\end{pmatrix},\]
has norm at least $1/2$. Stated another way, we have

\begin{align}&\max\left(\left\vert\left\vert\begin{pmatrix}u_{2n_k}\\
v_{2n_k}
\end{pmatrix}\right\vert\right\vert,\left\vert\left\vert\begin{pmatrix}u_{n_k}\\
v_{n_k}
\end{pmatrix}\right\vert\right\vert\right)
\nonumber\\
\geq&\frac{1}{2}\min\left(1,\frac{1}{\mathrm{tr}(M_{n_k}(z))} \right).\label{GordonInequality}
\end{align}
Assume that $\mathrm{tr}(M_{n_k}(z))$ is bounded. Thus the existence of a sequence $\{n_k\}$ so that
$\alpha(j)=\alpha(j+n_k), 1\leq j\leq n_k$ implies that the sequence $u$ doesn't decay on the right for any initial values $u_0,v_0$ (recall that $u_0, v_0$ cannot both be zero). Note that it is not possible for $v$ to be large at $+\infty$ and for $u$ to decay at $+\infty$, due to the fact that $\mathcal M u= zv$, and that all the $2\times 2$ blocks (the $\Theta_{2k-1}$) that make up $\mathcal M$ have determinant $-1$ and uniformly bounded trace. This all implies that $z$ is not an eigenvalue for the operator $\mathcal E$ and so we proved Proposition \ref{2block}. 

\end{proof}
\begin{proof}[Proof of Proposition \ref{3block}]
We start the same way as the previous proof and arrive again at (\ref{GordonInequality}). This time we assume instead that $\vert\mathrm{tr}(M_{n}(z))\vert>1$ for $n=m_k$, where the $\{m_k\}$ are a subsequence of the $\{n_k\}$. If this subsequence does not exist $\vert\mathrm{tr}(M_{n}(z))\vert$ is bounded and we can simply apply Proposition \ref{2block}. Now apply the matrix equation to 
\[\begin{pmatrix}
u_{-m_k}\\
v_{-m_k}
\end{pmatrix},
\]
and we obtain
\begin{equation*}
\begin{pmatrix}u_{m_k}\\
v_{m_k}
\end{pmatrix}-
\mathrm{tr}(M_{m_k}(z))
\begin{pmatrix}u_{0}\\
v_{0}
\end{pmatrix}+\begin{pmatrix}
u_{-m_k}\\
v_{-m_k}
\end{pmatrix}=0,
\end{equation*}
and given the assumption  $\vert u_0\vert^2+\vert v_0\vert ^2=1$, we must have 

\begin{align*}&\max\left(\left\vert\left\vert\begin{pmatrix}u_{m_k}\\
v_{m_k}
\end{pmatrix}\right\vert\right\vert,\left\vert\left\vert\begin{pmatrix}u_{-m_k}\\
v_{-m_k}
\end{pmatrix}\right\vert\right\vert\right)
\geq\frac{1}{2}.
\end{align*}
Again, it is not possible for $v$ to be large at $+\infty$ or $-\infty$ and for $u$ to decay, since $\mathcal Mu=zv$. Thus the solution $u$ cannot decay at both $\pm\infty$, and we conclude that $z$ is not an eigenvalue for the operator $\mathcal E$. This proves Proposition \ref{3block}.
\end{proof}
\end{section}
\begin{section}{Floquet Theory}
As a first step, we have to say a few words about a Floquet theory for the Gesztesy-Zinchenko transfer matrices. Note that a Floquet theory for the CMV operator is already well understood (see Chapter 11.2 of \cite{Simon}), but using the standard Szeg\H o transfer matrices rather than the Gesztesy-Zinchenko transfer matrices. 

Fix $z\in \partial\D$,  an even period $q\geq 4$, and a \em skew angle \em $\phi\in\partial\D$. We define the \em discriminant \em $\Delta(z_0)=\mathrm{tr} M_q(z_0)$. Our goal is to relate this function with the spectrum of the coresponding periodic extended CMV operator. To be precise, we want to show that for $q$-periodic extended CMV operators the spectrum is exactly the set $z$ where $\vert \Delta(z)\vert\leq 2$. The argument is more or less identical to the one given for Szeg\H o transfer matrices in Chapter 11.2 of \cite{Simon}, but for the readers' convenience we outline a few steps where the transition from Szeg\H o transfer matrices to Gesztesy-Zinchenko transfer matrices isn't as obvious.

 Consider the Floquet matrix given by $\mathcal E_q(\phi)=\mathcal L_q\mathcal M_q(\phi)$, where 

\begin{equation}\label{calM}
\mathcal M_q(\phi)=\begin{pmatrix}
-\alpha(q-1)&&&&\rho(q-1)\phi^{-1}\\
&\Theta_1&&&\\
&&\ddots &&\\
&&& \Theta_{q-3}&\\
\rho(q-1)\phi&&&&\overline{\alpha(q-1)}
\end{pmatrix},
\end{equation}
and 
\begin{equation}\label{CalL}
\mathcal L_q=\begin{pmatrix}
\Theta_0&&&&\\
&\ddots&&&\\
&&\ddots&&\\
&&&\ddots&\\
&&&&\Theta_{q-2}\\
\end{pmatrix}.
\end{equation}
Note that $\mathcal E_q(\phi)$ is a matrix with four nonzero elements on each row, obtained by taking $\mathcal E$, cutting out the $[0,q-1]\times [0,q-1]$ block, and modifying the first two and last two rows in the following way: The top two rows in $\mathcal E$ have one element each cut off in passing to $\mathcal E_q(\phi)$. We shift that element right $q$ places and multiply by $\phi^{-1}$. In the bottom row, we instead shift left by $q$ places and multiply by $\phi$. See Figure 11.3 of \cite{Simon}.

It is then straightforward to verify the following proposition.
\begin{prop}
$z_0\in\partial\D$ is an eigenvalue of $\mathcal E_q(\phi)$ if and only if $\phi$ is an eigenvalue of $M_q(z_0)$.
\end{prop}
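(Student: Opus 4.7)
The plan is to interpret the eigenvalue equation $\mathcal E_q(\phi)u = z_0 u$ as a twisted periodic boundary value problem on $[0,q-1]$ for the pair $(u,v)$, where $v$ is the auxiliary sequence introduced via the $\mathcal M$-factor in the CMV factorization. First I would write $\mathcal E_q(\phi) = \mathcal L_q \mathcal M_q(\phi)$, which is the analogue in the finite periodic setting of the standard $\mathcal E = \mathcal L \mathcal M$ factorization used in Section 2, and split the eigenvalue equation into $\mathcal M_q(\phi)u = z_0 v$ together with $\mathcal L_q v = u$. This is the finite-dimensional version of the two-sided relation $\mathcal M u = zv$ coming from Lemma 2.2 of \cite{GZ}.

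Next I would observe that the interior rows of $\mathcal L_q$ and $\mathcal M_q(\phi)$ are unmodified from $\mathcal L$ and $\mathcal M$, so for $1 \leq n \leq q-1$ the split eigenvalue equations reproduce exactly the local relations that define the Gesztesy-Zinchenko transfer matrices $T_n(z_0)$. Iterating gives
\[
\begin{pmatrix} u_n \\ v_n \end{pmatrix} = M_n(z_0)\begin{pmatrix} u_0 \\ v_0 \end{pmatrix}, \quad 0\leq n \leq q.
\]
The modified corner entries of $\mathcal M_q(\phi)$, carrying the factors $\rho(q-1)\phi$ and $\rho(q-1)\phi^{-1}$, are precisely those that in the full operator $\mathcal E$ would have coupled indices $q-1$ and $q$, and $-1$ and $0$, respectively; replacing them by $\phi^{\pm 1}$ multiples of the corresponding entries encodes the twisted periodic identifications $u_q = \phi u_0$ and $v_q = \phi v_0$. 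I would verify this by writing the two affected rows out explicitly and comparing with the unmodified rows of $\mathcal E$ that they replace.

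Combining the recursion with these boundary identifications yields $M_q(z_0)\binom{u_0}{v_0} = \phi \binom{u_0}{v_0}$, so $\phi$ is an eigenvalue of $M_q(z_0)$ with eigenvector $\binom{u_0}{v_0}$. Conversely, given such an eigenvector, propagating it by the transfer matrices produces finite sequences $u,v$ on $[0,q-1]$ that automatically satisfy all interior equations of $\mathcal E_q(\phi)u = z_0 u$ as well as the prescribed twist at the boundary, so $u$ is an eigenvector of $\mathcal E_q(\phi)$.

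The main obstacle is purely bookkeeping: one must carefully check that the specific placement of $\phi^{\pm 1}$ in the four corners of $\mathcal M_q(\phi)$ in display \eqref{calM} corresponds exactly to the simultaneous identifications $(u_q,v_q) = \phi(u_0,v_0)$, rather than, say, a twist only on $u$ or a twist with a different power of $\phi$ on $v$. Tracking this requires keeping the parity convention of the $\Theta_{2k-1}$ blocks consistent with the index shifts used in defining $T_n$ for odd versus even $n$, and invoking the $\mathcal M u = zv$ relation at the boundary so that the $v$-identification follows from the $u$-identification automatically rather than needing to be imposed separately.
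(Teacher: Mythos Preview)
Your proposal is correct and follows essentially the same strategy as the paper: both arguments hinge on the $\mathcal L_q\mathcal M_q(\phi)$ factorization together with the relation $\mathcal M u=z_0v$ from Lemma~2.2 of \cite{GZ}, and both reduce the question to showing that the $\phi$-twist on $u$ forces the same $\phi$-twist on $v$, whence $M_q(z_0)\binom{u_0}{v_0}=\phi\binom{u_0}{v_0}$. The only cosmetic difference is that the paper phrases everything in terms of extending $\hat u$ to a Floquet solution on $\mathbb Z$ and invoking \cite{GZ} there, whereas you work directly with the finite matrices and verify the corner bookkeeping by hand; the content is the same.
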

\begin{proof}
We simply note that Lemma 2.2 of \cite{GZ} implies that if the solution $u$ to the eigenvalue equation $\mathcal Eu=z_0u$ is a Floquet solution with skew angle $\phi$, then  the corresponding solution $v$ of $\mathcal E^T v=z_0v$ satisfies $\mathcal M u=z_0 v$. It is easy to see that this is equivalent to $\mathcal M_q(\phi) \hat u=z_0\hat v$ for  $\hat u\in\mathbb C^q$ whose terms are $u(1), \ldots u(q)$) and $\hat v\in\mathbb C^q$ whose terms are $v(1), \ldots v(q)$. But then clearly
 \[z_0\phi\hat v=\mathcal M_q(\phi)\phi\hat u,\] and this implies that $\phi v(1)=v(q+1)$.
 Since we already know that $\phi u(1)= u(q+1)$, and since our matrices $M_q(z_0)$ act on vectors of the form \[\begin{pmatrix}
u_n\\v_n
\end{pmatrix},\]
we can conclude that $\phi$ will be an eigenvalue of $M_q(z_0)$ if and only if there exist sequences $u(n),v(n)$ that are each of the form $\pi(n)\phi^n$ with $\pi(n)$ periodic, such that $\mathcal Eu=z_0u$ (and hence $\mathcal E_q\hat u=z_0\hat u$ ) and such that $v$ is associated to $u$ in the manner given by Lemma 2.2 of \cite{GZ}.
\end{proof}
Note that $M_q(z_0)$ is unimodular. Therefore, if $\phi$ is an eigenvalue then $\phi^{-1}$ must be also. We thus see that $z_0$ is an eigenvalue of $\mathcal E_q(\phi)$ if and only if
\begin{equation}\label{FloquetTrace}\Delta(z_0)=\mathrm{Tr}(M_q(z_0))=\phi+\phi^{-1}= 2\cos(k),
\end{equation}
where $\phi=e^{ik}$. We may at this point mimic the standard Floquet Theory of CMV operators to assert that the spectrum of a $q$-periodic CMV operator is precisely the subset of $z\in\partial \D$ when $\vert \Delta(z)\vert\leq 2$.

\end{section}
\begin{section}{Uniform results for the period doubling subshift}
Let us recall our discussion on the period doubling subshift in the introduction. We have an alphabet $\mathcal A=\{a,b\}$. We consider the substitution sequence $S(a)=ab$, $S(b)=aa$, and when we iterate this substitution we obtain a one-sided invariant sequence $u=abaaabab\ldots$. Choose a nonconstant function $f:\mathcal A\to \D$ and then consider the associated subshift $(\Omega,\mathcal T,f)$. We can define for $\omega\in \Omega$ an extended CMV operator $\mathcal E_\omega$ whose corresponding Verblunsky coefficients are  given by $\alpha(n)= f(\omega_n)$.

By construction, we have the recurrence

\begin{align}
\nonumber S^n(a)=&S^{n-1}(a)S^{n-1}(b),\\
S^n(b)=&S^{n-1}(a)S^{n-1}(a).\label{p2.recurrence}
\end{align}

Given an $n$, we define $T_k^a(z)$ for $1\leq k\leq 2^n$ as $T_k(z)$ taking $\alpha(k)=f(S^n(a)_k)$ (To clarify, $S^n(a)_k$ is the $k$th letter of the word $S^n(a)$). Similarly, we define $T_k^b(z)$ for $1\leq k\leq 2^n$ as $T_k(z)$ taking $\alpha(k)=f(S^n(b)_k)$. Following from (\ref{p2.recurrence}) and the discussion on the period doubling sequence earlier in this section, let us define also 
\begin{align}\nonumber
M_{(a),n}(z)=& \prod_{k=2^n}^{1} T_k^a(z),\\
M_{(b),n}(z)=&\prod_{k=2^n}^{1} T_k^b(z).\label{M_n^a}
\end{align}
We will often suppress the dependence on $z$ for notational convenience. We verify using (\ref{p2.recurrence}) that for $n\geq 1$ we have
\begin{align}
M_{(a),n+1}=\prod_{k=2^{n+1}}^{1} T_k^a(z)=\prod_{k=2^n}^{1} T_k^b(z)\prod_{k=2^n}^{1} T_k^a(z) =M_{(b),n}M_{(a),n},\nonumber \\
M_{(b),n+1}=\prod_{k=2^{n+1}}^{1} T_k^b(z)=\prod_{k=2^n}^{1} T_k^a(z)\prod_{k=2^n}^{1} T_k^a(z)=M_{(a),n}M_{(a),n}.
\end{align}

\begin{lemma}\label{Blemma}
 $\mathrm{Tr}(M_{(b),n}M_{(a),n}^{-1})=\frac{2(\mathrm{Re}(-f(a)\overline{f(b)})+1)}{\sqrt{1-\vert f(a)\vert^2}\sqrt{1-\vert f(b)\vert^2}}\geq 2$ for $n\geq 1$.
 \end{lemma}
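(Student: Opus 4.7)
The plan is to exploit a structural observation about the period doubling substitution: for every $n\geq 1$, the words $S^n(a)$ and $S^n(b)$ agree in every position except the last one. I would first establish this by induction on $n$. The base case $n=1$ is immediate since $S(a)=ab$ and $S(b)=aa$. For the inductive step, using the recurrence (\ref{p2.recurrence}), write $S^{n+1}(a)=S^n(a)S^n(b)$ and $S^{n+1}(b)=S^n(a)S^n(a)$; the first halves are identical, while the second halves differ only in the last position by the inductive hypothesis. So $S^{n+1}(a)$ and $S^{n+1}(b)$ differ only in position $2^{n+1}$.

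The immediate consequence is that $T_k^a(z)=T_k^b(z)$ for $1\leq k\leq 2^n-1$. Letting $P=\prod_{k=2^n-1}^{1} T_k^a(z)=\prod_{k=2^n-1}^{1} T_k^b(z)$, the definitions in (\ref{M_n^a}) give
\[
M_{(b),n}M_{(a),n}^{-1}=T_{2^n}^b\,P\,P^{-1}(T_{2^n}^a)^{-1}=T_{2^n}^b(z)\,T_{2^n}^a(z)^{-1},
\]
reducing the trace computation to a single $2\times2$ product. Since $2^n$ is even for $n\geq1$, both $T_{2^n}^a$ and $T_{2^n}^b$ are in the ``even'' form from the definition of $T_n$. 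Call the last letters $x$ and $y$, noting $\{x,y\}=\{a,b\}$, and set $c=f(x)$, $d=f(y)$. I would then invert the even-form matrix (using that $\det T_n(z)=-1$, as stated in the excerpt) to get
\[
T_{2^n}^a(z)^{-1}=\frac{1}{\rho_x}\begin{pmatrix}\overline{c}&1\\1&c\end{pmatrix},
\]
multiply by $T_{2^n}^b(z)=\rho_y^{-1}\begin{pmatrix}-d&1\\1&-\overline{d}\end{pmatrix}$, and read off the diagonal entries. The trace becomes $(2-d\overline c-\overline d c)/(\rho_x\rho_y)=2(1+\mathrm{Re}(-d\overline c))/(\rho_x\rho_y)$. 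Since $\mathrm{Re}(-f(a)\overline{f(b)})=\mathrm{Re}(-\overline{f(a)}f(b))$, the expression is symmetric in $a,b$ regardless of which letter ends $S^n(a)$ versus $S^n(b)$, giving the stated identity.

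For the lower bound, I would estimate $\mathrm{Re}(-f(a)\overline{f(b)})\geq -|f(a)||f(b)|$ and then show
\[
1-|f(a)||f(b)|\geq \sqrt{1-|f(a)|^2}\sqrt{1-|f(b)|^2}.
\]
Squaring both sides (both are nonnegative, as $f$ takes values in $\mathbb{D}$) and simplifying reduces this to $(|f(a)|-|f(b)|)^2\geq 0$, which is trivially true. Combining the two estimates yields the lower bound $\geq 2$.

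I expect no real obstacle: the only genuine ingredient is the combinatorial fact that $S^n(a)$ and $S^n(b)$ differ only in their final letter, which is a clean one-line induction; everything else is algebra on a single pair of explicit $2\times2$ matrices and an elementary real-variable inequality.
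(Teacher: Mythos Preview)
Your argument is correct. The combinatorial observation that $S^n(a)$ and $S^n(b)$ differ only in their final letter is valid (the induction is exactly as you describe), and the subsequent matrix algebra and inequality are fine; the symmetry argument handling which letter lands in the last slot is also correct since $\mathrm{Re}(-d\overline c)=\mathrm{Re}(-c\overline d)$.

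Your route differs from the paper's. The paper computes only the base case $M_{(b),1}M_{(a),1}^{-1}=T_2^b(z)T_2^a(z)^{-1}$ explicitly, and then runs a purely algebraic induction on traces: using the recursions $M_{(a),n+1}=M_{(b),n}M_{(a),n}$, $M_{(b),n+1}=M_{(a),n}^2$ together with cyclicity of trace and the fact that $\mathrm{Tr}(A)=\mathrm{Tr}(A^{-1})$ for unimodular $A$, it shows $\mathrm{Tr}(M_{(b),n+1}M_{(a),n+1}^{-1})=\mathrm{Tr}(M_{(b),n}M_{(a),n}^{-1})$. Your approach is more structural: you prove the stronger statement that the matrix $M_{(b),n}M_{(a),n}^{-1}$ itself collapses to a single product $T_{2^n}^b T_{2^n}^{a\,-1}$ for every $n\ge1$, not just that its trace is constant. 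The paper's method is closer in spirit to standard trace-map machinery and avoids any combinatorics on the words; yours is more transparent about \emph{why} the trace is constant and in fact yields more (the full matrix, up to the parity-dependent swap of which letter sits where). Both the inequality arguments are essentially the same AM--GM step.
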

\begin{proof}
First, using the fact that $\vert f(a)\vert, \vert f(b)\vert\leq 1$, we observe
\[\frac{2(\mathrm{Re}(-f(a)\overline{f(b)})+1)}{\sqrt{1-\vert f(a)\vert^2}\sqrt{1-\vert f(b)\vert^2}}\geq  \frac{2(1-\vert f(a)\vert \cdot\vert f(b)\vert)}{\sqrt{1-\vert f(a)\vert^2}\sqrt{1-\vert f(b)\vert^2}}\geq 2.\]

We can compute that the lemma holds for $n=1$:
\begin{align*}
M_{(b),1}M_{(a),1}^{-1}=&T_2^b(z)T_1^b(z)T_1^a(z)^{-1}T_2^a(z)^{-1}\\
=&T_2^b(z)T_2^a(z)^{-1},\text{  since $T_1^a(z)=T_1^b(z)$}\\
=&\frac{1}{\sqrt{1-\vert f(a)\vert^2}}\begin{pmatrix}
-f(a)&1\\
1&-\overline{f(a)}
\end{pmatrix}
\frac{-1}{\sqrt{1-\vert f(b)\vert^2}}\begin{pmatrix}
-\overline{f(b)}&-1\\
-1&-f(b)
\end{pmatrix}\\
=&\frac{1}{\sqrt{1-\vert f(a)\vert^2}}\frac{1}{\sqrt{1-\vert f(b)\vert^2}}\begin{pmatrix}
-f(a)\overline{f(b)}+1 & f(b)-f(a)\\
\overline{f(b)}-\overline{f(a)}&-f(b)\overline{f(a)}+1
\end{pmatrix}\\
\end{align*}
Thus we have
\begin{align*}
\mathrm{Tr}(M_{(b),1}M_{(a),1}^{-1})=&\frac{2(\mathrm{Re}(-f(a)\overline{f(b)})+1)}{\sqrt{1-\vert f(a)\vert^2}\sqrt{1-\vert f(b)\vert^2}}.
\end{align*}
 Note that for $n\geq 1$

\begin{align*} 
\mathrm{Tr}(M_{(b),n+1}M_{(a),n+1}^{-1})=& \mathrm{Tr}(M_{(a),n}M_{(a),n}M_{(a),n}^{-1}M_{(b),n}^{-1}) \\
=& \mathrm{Tr}(M_{(a),n}M_{(b),n}^{-1})\\
=&\mathrm{Tr}(M_{(b),n}M_{(a),n}^{-1}).
\end{align*}
The last equality following from the fact that $M_{(a),n}M_{(b),n}^{-1}$ has determinant $1$. The lemma then follows from a simple induction argument. 
\end{proof}
Let us define $x_n=\mathrm{Tr}(M_{(a),n})$, $y_n=\mathrm{Tr}(M_{(b),n})$, and $B=\mathrm{Tr}(M_{(b),1}M_{(a),1}^{-1})$. We instead write $x_n(z), y_n(z)$ when we wish to emphasize the dependence on $z$. We will perform an analysis on the trace map dynamics of $M$, a central tool in many previous treatments of operators generated by subshifts. See \cite{Peyriere} for a survey of the method.

\begin{prop}
For $n\geq 1$ the sequences $x_n, y_n$ are real and obey the recursion
\begin{align}
x_{n+1}=x_ny_n-B, \label{PDRecursion_x}\\
y_{n+1}=x_n^2-2.\label{PDRecursion_y}
\end{align}

\end{prop}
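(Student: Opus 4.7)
The plan is to deduce both recursions and the realness claim from the matrix recurrences $M_{(a),n+1}=M_{(b),n}M_{(a),n}$ and $M_{(b),n+1}=M_{(a),n}M_{(a),n}$ already derived, together with two standard trace identities for $\mathrm{SL}(2,\mathbb{C})$ matrices. First I would record the small preliminary that each $T_k$ has determinant $-1$ (a direct $2\times 2$ computation in both the odd and even cases, using $\rho(n)^2=1-|\alpha(n)|^2$), so for every $n\geq 1$ we have $\det M_{(a),n}=\det M_{(b),n}=(-1)^{2^n}=1$, i.e.\ both transfer matrix products lie in $\mathrm{SL}(2,\mathbb{C})$.

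Given this, the recursion $y_{n+1}=x_n^2-2$ is immediate from Cayley--Hamilton: any $A\in\mathrm{SL}(2,\mathbb{C})$ satisfies $A^2-\mathrm{Tr}(A)A+I=0$, so $\mathrm{Tr}(A^2)=\mathrm{Tr}(A)^2-2$, and applying this with $A=M_{(a),n}$ gives $y_{n+1}=\mathrm{Tr}(M_{(a),n}^{2})=x_n^{2}-2$. For the other recursion I would invoke the two-variable identity $\mathrm{Tr}(AB)+\mathrm{Tr}(AB^{-1})=\mathrm{Tr}(A)\mathrm{Tr}(B)$ valid for $A,B\in\mathrm{SL}(2,\mathbb{C})$ (another consequence of Cayley--Hamilton), taken with $A=M_{(b),n}$ and $B=M_{(a),n}$. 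This yields $x_{n+1}=\mathrm{Tr}(M_{(b),n}M_{(a),n})=x_ny_n-\mathrm{Tr}(M_{(b),n}M_{(a),n}^{-1})$, and Lemma \ref{Blemma} identifies the last trace as the constant $B$ for all $n\geq 1$, producing $x_{n+1}=x_ny_n-B$.

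Realness I would handle by induction on $n$. Since $B$ is manifestly real from the explicit formula in Lemma \ref{Blemma}, once $x_1$ and $y_1$ are known to be real the two recursions propagate realness to all subsequent indices. For the base case I would just carry out the $2\times 2$ products $M_{(a),1}=T_2^aT_1^a$ and $M_{(b),1}=T_2^bT_1^b$ explicitly; using $|z|=1$ so that $1/z=\bar z$, the trace of $M_{(a),1}$ collapses to something of the form $(2\mathrm{Re}(z)+2\mathrm{Re}(f(b)\overline{f(a)}))/(\rho(a)\rho(b))$, with an analogous real expression (purely in terms of $\mathrm{Re}(z)$ and $|f(a)|^2$) for $y_1$.

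There is no serious obstacle: the proposition is a bookkeeping exercise in $\mathrm{SL}(2)$ trace identities given Lemma \ref{Blemma}. The only points needing a little care are checking that the hypothesis $n\geq 1$ is used wherever we need the determinants of $M_{(a),n}$ and $M_{(b),n}$ to equal $+1$ (the case $n=0$ would give $-1$), and spelling out enough of the base case to see that realness is actually built in from $n=1$.
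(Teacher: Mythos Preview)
Your proposal is correct and follows essentially the same route as the paper: the paper invokes the standard $\mathrm{SL}(2)$ trace identity $\mathrm{tr}(MN)=\mathrm{tr}(M)\mathrm{tr}(N)-\mathrm{tr}(MN^{-1})$ with $M=M_{(b),n},\,N=M_{(a),n}$ and then with $M=N=M_{(a),n}$, and handles realness by computing $x_1,y_1$ explicitly and inducting. Your write-up is a bit more explicit about why the matrices are unimodular (so the identity applies) and about the form of the base case, but the argument is the same.
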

\begin{proof}
Note the following standard identity for unimodular $2\times 2$ matrices $M,N$ (for instance, refer to (17) of \cite{BIST})
\begin{equation}\label{traceidentity}
\mathrm{tr}(MN)=\mathrm{tr}(M)\mathrm{tr}(N)-\mathrm{tr}(MN^{-1}).
\end{equation}
Using Lemma \ref{Blemma}, we know that if we set $M=M_{(b),n}, N=M_{(a),n}$ we get the first equation, and if we set $M=N=M_{(a),n}$ we get the second.

We calculate that $x_1, y_1$ are both real, and as a consequence $x_n,y_n$ is real for all $n\geq 1$.
\end{proof}

\begin{defn}
$\mathcal U$ is the set of all points $(x,y)\in \R^2$ such that there is $n_0$ such that if $x_0=x, y_0=y$, for all $n\geq n_0$, $\vert x_n\vert >2 $. We call the interior of $\mathcal U$ the set of \em unstable points\em.
\end{defn}

We note here that the interior of $\mathcal U$ is certainly nonempty, since by (\ref{PDRecursion_x}), (\ref{PDRecursion_y}) all points with $x$ and $y$ sufficiently large (say, both larger than $B$) are in $\mathcal U$. We will also remark that in the Schr\"odinger setting, it is known that $\mathcal U$ is an open set (Lemma 2 of \cite{BBG}). That lemma doesn't directly apply in our setting unless $B= 2$. Nevertheless, in our paper the question of whether $\mathcal U$ is open or not only affects the proof of Proposition \ref{tracebounds} below, and as the reader will see it suffices to consider the interior of $\mathcal U$.

\begin{prop} \label{mathcal U}
If $(x_0(z), y_0(z))$ is an unstable point, then $z$ is in the resolvent set of the operator $\mathcal E$.
\end{prop}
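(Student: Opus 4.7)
The plan is to bootstrap from periodic approximants and the Floquet theory developed in the preceding section. Specifically, for each $n\geq 2$ I would let $\mathcal{E}_{(a),n}$ denote the $2^n$-periodic extended CMV operator whose one-period Verblunsky block is $f(S^n(a))$, and define $\mathcal{E}_{(b),n}$ analogously from $f(S^n(b))$. By (\ref{M_n^a}), the one-period Gesztesy-Zinchenko transfer matrix of $\mathcal{E}_{(a),n}$ at $z$ is precisely $M_{(a),n}(z)$, and likewise for $\mathcal{E}_{(b),n}$. Since $2^n\geq 4$ is even, the Floquet theory of the previous section --- in particular (\ref{FloquetTrace}) --- identifies
\[
\sigma(\mathcal{E}_{(a),n}) = \{z \in \partial\D : |x_n(z)| \leq 2\}, \qquad \sigma(\mathcal{E}_{(b),n}) = \{z \in \partial\D : |y_n(z)| \leq 2\}.
\]

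Next I translate the unstable-point hypothesis into $z$ lying outside both periodic spectra for every large enough $n$. By the definition given just above the proposition, $(x_0(z),y_0(z))$ being an unstable point places it in the interior of $\mathcal{U}$, hence in $\mathcal{U}$ itself, so there exists $n_0$ with $|x_n(z)|>2$ for all $n\geq n_0$. The recursion (\ref{PDRecursion_y}) then gives $y_{n+1}(z)=x_n(z)^2-2>2$ for every $n\geq n_0$, so $|y_n(z)|>2$ for every $n>n_0$. Hence $z\notin\sigma(\mathcal{E}_{(a),n})\cup\sigma(\mathcal{E}_{(b),n})$ for all $n>n_0$.

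Now I invoke the inclusion $\sigma(\mathcal{E})\subseteq \sigma(\mathcal{E}_{(a),n})\cup\sigma(\mathcal{E}_{(b),n})$ valid for every $n\geq 2$, which is the heart of the argument. This should follow from the substitution recurrence (\ref{p2.recurrence}): the Verblunsky sequence of $\mathcal{E}$ decomposes at scale $2^n$ into blocks of type $S^n(a)$ and $S^n(b)$, so any transfer matrix of $\mathcal{E}$ over a consecutive multiple of $2^n$ steps is a product of $M_{(a),n}(z)$ and $M_{(b),n}(z)$. When both of these have $|\mathrm{tr}|>2$ they are hyperbolic, and I would convert this into an exponential Green's function estimate for $(\mathcal{E}-z)^{-1}$ by arguing uniform hyperbolicity along any such product (using the specific substitution pattern to align stable and unstable directions, in the spirit of the Schrödinger treatment of \cite{BBG}). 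Combining with the previous step, $z\notin\sigma(\mathcal{E})$, so $z$ lies in its resolvent set.

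The main technical obstacle is precisely this last inclusion, and it is exactly where the \emph{interior}-of-$\mathcal{U}$ hypothesis earns its keep. Continuity of $z\mapsto(x_n(z),y_n(z))$ combined with the openness of a neighborhood of $(x_0(z),y_0(z))$ inside $\mathcal{U}$ upgrades the pointwise condition $|x_n(z)|,|y_n(z)|>2$ to a uniform version $|x_n(z')|,|y_n(z')|>2+\delta$ for $z'$ in a $\partial\D$-neighborhood of $z$. This uniform margin is what converts the pointwise hyperbolicity of $M_{(a),n},M_{(b),n}$ into the uniform hyperbolicity that Green's function / Combes-Thomas style arguments need to conclude that a full $\partial\D$-neighborhood of $z$ avoids $\sigma(\mathcal{E})$, placing $z$ in an open spectral gap rather than merely on its boundary.
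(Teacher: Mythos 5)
The decisive step of your argument --- the inclusion $\sigma(\mathcal E)\subseteq\sigma(\mathcal E_{(a),n})\cup\sigma(\mathcal E_{(b),n})$ for a \emph{fixed} $n$ --- is exactly the part you do not prove, and the sketch you offer for it does not work. Knowing that $M_{(a),n}(z)$ and $M_{(b),n}(z)$ each have $\vert\mathrm{tr}\vert>2$ makes each matrix individually hyperbolic, but hyperbolicity of two $\mathrm{SL}(2)$ matrices implies nothing about arbitrary products of them: mixed products can be elliptic, and norms along the substitution-ordered products need not grow, because the stable and unstable directions of the two blocks need not be aligned. There is a concrete warning sign in this very paper that the fixed-$n$ claim is not safe in the CMV setting: the trace-map escape criterion here requires traces exceeding $B=\mathrm{Tr}(M_{(b),1}M_{(a),1}^{-1})\geq 2$, which can be strictly larger than $2$, so $\vert x_n(z)\vert>2$ and $\vert y_n(z)\vert>2$ at a single scale does not even force the trace orbit to escape, let alone yield a Combes--Thomas estimate for $\mathcal E$. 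Your use of the interior-of-$\mathcal U$ hypothesis also misses the point: a margin $2+\delta$ in the spectral parameter $z'$ gives no uniformity over the positions/blocks in the product, which is what a uniform-hyperbolicity or Green's function argument would actually need.

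The paper's proof avoids this entirely and uses the eventual condition, not a fixed-$n$ covering statement. One takes the single family of $2^n$-periodic approximants $\hat{\mathcal E}(n)$ obtained by periodizing the first $2^n$ Verblunsky coefficients; by the Floquet theory, $\vert x_n(z)\vert>2$ places $z$ in a spectral gap of $\hat{\mathcal E}(n)$. The unstable-point (interior) hypothesis, together with continuity of $z\mapsto (x_0(z),y_0(z))$ and of the trace maps, is used for the step your draft does not supply: it excludes a whole neighborhood of $z$ from $\sigma(\hat{\mathcal E}(t))$ for all large $t$, and since $\hat{\mathcal E}(t)\to\mathcal E$ (strong resolvent sense, as the coefficients agree on longer and longer blocks), every point of $\sigma(\mathcal E)$ is a limit of points of $\sigma(\hat{\mathcal E}(t))$; hence $z$ lies in a gap of $\mathcal E$. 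If you replace your ``heart of the argument'' with this approximation step --- for which your computation that $\vert x_n(z)\vert>2$ for all $n\geq n_0$ already suffices, the $(b)$-family being unnecessary --- the proof closes; as written, it has a genuine gap.
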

\begin{proof}
In the Schr\"odinger setting, the proof was given in \cite{Bellisard}. Let $z\in\partial \D$, and let us label $\hat{\mathcal E}(n)$ as the periodic approximation of $\mathcal E$, generated by Verblunsky coefficients $\hat \alpha(j)= \alpha(j')$, where $j'\equiv j\mod 2^n$ and $j'\in [0,2^n)$. In this case, $\vert x_n(z)\vert>2$ means that $z$ is in a spectral gap of $\hat{\mathcal E}(n)$. Thus if $(x_N(z), y_N(z))$ is unstable for some $N$, there is some neighborhood $U$ of $z$ that is in a spectral gap of a sequence $\hat{\mathcal E}(t)$ for $t\geq N$ of periodic approximations of $\mathcal E$. We conclude that $z$ is also in a spectral gap of $\mathcal E$.
\end{proof}
We now establish trace bounds for points in the spectrum.
\begin{prop}\label{tracebounds}
If $z\in\partial \D$ is in the spectrum of $\mathcal E$, then for any integer $n\geq 1$ either $\vert x_n(z)\vert \leq B$ or $\vert x_{n+1}(z)\vert \leq B$.
\end{prop}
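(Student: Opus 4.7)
The plan is to prove the contrapositive: assume there is an $n \geq 1$ with $|x_n(z)| > B$ and $|x_{n+1}(z)| > B$, and derive that $z$ lies in the resolvent set of $\mathcal{E}$, contradicting $z$ being in the spectrum. The whole argument is to exhibit an open, forward-invariant ``escape region'' for the trace map, show the orbit enters it, and then invoke Proposition \ref{mathcal U}.

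First I would set up the escape region. Recall from Lemma \ref{Blemma} that $B \geq 2$. Consider the open set $R = \{(x,y) \in \R^2 : |x| > B \text{ and } |y| > B\}$, and check directly from (\ref{PDRecursion_x})--(\ref{PDRecursion_y}) that $R$ is forward-invariant under the trace map $(x,y) \mapsto (xy - B,\, x^2 - 2)$. Indeed, for $(x,y) \in R$ the new $y$-coordinate satisfies $x^2 - 2 > B^2 - 2 \geq B$ (since $(B-2)(B+1) \geq 0$ for $B \geq 2$), and the new $x$-coordinate satisfies $|xy - B| \geq |x||y| - B > B^2 - B \geq B$ (again using $B \geq 2$).

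Next I would use the hypothesis to deposit the orbit inside $R$. By (\ref{PDRecursion_y}), $y_{n+1}(z) = x_n(z)^2 - 2 > B^2 - 2 \geq B$, and by assumption $|x_{n+1}(z)| > B$, so $(x_{n+1}(z), y_{n+1}(z)) \in R$. Forward-invariance of $R$ then gives $(x_m(z), y_m(z)) \in R$ for every $m \geq n+1$, and in particular $|x_m(z)| > B \geq 2$ for all $m \geq n+1$. Thus $(x_0(z), y_0(z))$ certainly lies in $\mathcal{U}$.

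Finally I would upgrade this to membership in the \emph{interior} of $\mathcal{U}$, which is needed to apply Proposition \ref{mathcal U}. The $(n+1)$-fold iterate of the trace map is polynomial, hence continuous on $\R^2$, so the preimage $P$ of the open set $R$ under this iterate is open. Every point of $P$ is in $\mathcal{U}$ by the same forward-invariance argument, so $P$ is an open subset of $\mathcal{U}$ containing $(x_0(z), y_0(z))$. Hence $(x_0(z), y_0(z))$ is an unstable point, and Proposition \ref{mathcal U} forces $z$ into the resolvent set — contradiction. The only mildly delicate step is the arithmetic verifying forward-invariance of $R$, which hinges entirely on the bound $B \geq 2$ already established in Lemma \ref{Blemma}; everything else is continuity plus the preceding Proposition.
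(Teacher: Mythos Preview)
Your proof is correct and follows essentially the same approach as the paper: argue by contrapositive, use the recursion and $B\geq 2$ to trap the orbit in an open forward-invariant region (you use $\{|x|>B,\,|y|>B\}$, the paper uses $\{|x|>B,\,y>2\}$), and then pull back by continuity of the polynomial iterate to land in the interior of $\mathcal U$ before invoking Proposition~\ref{mathcal U}. The only cosmetic difference is your choice of escape region and the explicit verification of its invariance, which the paper leaves as ``clear.''
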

\begin{proof}
Recall that from Lemma \ref{Blemma}, $B\geq 2$. Assume instead that for some $n$ we have both $\vert x_{n}(z)\vert >B$ and $\vert x_{n+1}(z)\vert >B$. By (\ref{PDRecursion_y}) we must also have $y_{n+1}(z)>B$. But this implies, by (\ref{PDRecursion_x}) that $\vert x_{n+2}(z)\vert >B$. Thus it is an easy induction argument to show that $\vert x_{m}(z)\vert >B\geq 2$ for all $m\geq n$. This implies that $(x_0(z),y_0(z))\in\mathcal U$.

Furthermore, let us consider the open subsets $D_+, D_-$ of $\mathbb R^2$, defined as $D_\pm=\{\pm x>B, y> 2\}$. It is clear that if for some $n$ $(x_n,y_n) \in D_+\cup D_-$ then $(x_0,y_0)\in\mathcal U$. Since $x_n, y_n$ are analytic functions of $x_0, y_0$ for fixed $n$, if we choose $(x_0,y_0)$ such that $(x_n,y_n)\in D_+\cup D_-$, then every point in some open neighborhood $S$ of $(x_0,y_0)$ must have their $n$th images in $D_+\cup D_-$, and thus every point in $S$ must be unstable. Thus we can show that $(x_0(z),y_0(z))$ is in the interior of $\mathcal U$, and so by Proposition \ref{mathcal U}, we conclude $z$ must be in the resolvent set of $\mathcal E$.
\end{proof}
\begin{proof}[Proof of Theorem \ref{PDtheorem}]
Absolutely continuous spectrum is ruled out in \cite{DL_Szego}. In this section we have already established all the tools necessary in the CMV setting, and so we can now easily replicate the combinatorial analysis in \cite{Damanik.doubling} to rule out point spectrum. Specifically,
our Propositions \ref{tracebounds}, \ref{3block} and \ref{2block} replace Propositions 2.2, 2.4a, and 2.4b respectively in that paper. Since the subshift we consider is identical, what follows is simply an extended case analysis on the given element $\omega\in\Omega$ which works exactly the same way as in \cite{Damanik.doubling}.
\end{proof}
As a direction for future research, it will be interesting to replicate the analysis in Section 3 of \cite{BBG} where they proved several results about the spectral gaps for the Schr\"odinger operator with period doubling potential, in addition to proving that the spectrum is uniformly singular continuous. In particular, they were able to determine which gaps were open, and to calculate the rate of growth of the gaps in terms of the absolute value of the potential.

 Their results rely on exploiting trace map dynamics for Schr\"odinger transfer matrices. The analysis in our CMV context will be more complicated, since our trace map equations (\ref{PDRecursion_x}), (\ref{PDRecursion_y}) have an additional parameter $B$, and as a result the nature of the fixed points, et cetera will not be the same. For example, in general it is not clear in our context that $\mathcal U$ is an open set, a fact that is true in the Schr\"odinger setting and an essential first step for many of the theorems in \cite{BBG}. 
\end{section}

\begin{section}{Almost sure results for subshifts generated by Sturmian sequences and codings of rotations}

In this section, we will consider Verblunsky coefficients generated by subshifts of \em Sturmian sequences \em and \em codings of rotations\em. To define a Sturmian sequence, recall that we choose a rotation angle $\theta\in [0,1)$, and let $a_n$ denote the coefficients in its continued fraction expansion. 

Let $f$ be a nonconstant function from $\{0,1\}$ to $\D$. Recall that we define our Verblunsky coefficients of our extended CMV matrix $\mathcal E_{\theta, \beta}$ by 
\[\alpha_{\theta, \beta}(n)=f(\chi_{[1-\theta,1)}(n\theta+\beta\mod 1)).\]
 
 As for codings of rotations, we instead choose an interval $I\subseteq \R/\Z$, and define our Verblunsky coefficients by
 
 \[\alpha_{\theta, \beta}(n)=f(\chi_{I}(n\theta+\beta\mod 1)).\] 

 In both cases, by standard results in ergodic theory (see for instance Theorem 10.5.21 of \cite{Simon}), the spectrum $\Sigma_\theta$ of $\mathcal E_{\theta, \beta}$ is independent of $\beta$.

 The proofs of Theorems \ref{Kaminaga0}, \ref{Kaminaga1} and \ref{Kaminaga2} are a close analogue of the proofs in \cite{Kaminaga}, which we sketch here for the reader's convenience. We first address the preliminaries, Lemmas 1,2,3 and 4 in that paper. Their Lemma 2 is proven in the CMV context as Theorem 10.16.1 in \cite{Simon} (that is, the fact that the spectrum is a.e independent of  $\omega\in\Omega$, and that we have a.e absence of point spectrum). This in turn implies their Lemma 1, which asserts that absence of point spectrum for a positive measure set of $\beta$ implies absence of point spectrum for a full measure set of $\beta$. Their Lemma $3$ is simply the Gordon lemma, so we can equivalently use our Proposition \ref{3block}. Finally, their Lemma 4 is a simple calculation. We have to make a small change to that Lemma 4, however. Rather than taking $\limsup$s over $n\in \mathbb N$, we only take $\limsup$s over $n$ in $\tilde{\mathbb N}$, (recall that $\tilde{\mathbb N}$ is the subset of $\mathbb N$ so that $q_n$ is even), and we thus obtain instead
 
\begin{equation}\label{lemma4} \limsup_{n\in \tilde{\mathbb N}} q_{n+1}/q_n\geq\limsup_{n\in \tilde{\mathbb N}} a_n.
\end{equation}
Again, absence of absolutely continuous spectrum was demonstrated in \cite{DL_Szego}.

 \begin{proof}[Proof of Theorem \ref{Kaminaga2}]
 
Where $\alpha_\beta$ refers to the Verblunsky coefficients generated using the phase $\beta$, let us define 
\[ E(n):=\{\beta\vert \alpha_\beta(j+q_n)=\alpha_\beta(j-q_n)=\alpha_\beta(j), 1\leq j\leq q_n\},\]
so $E(n)$ is the set of phases for which Proposition \ref{3block} is satisfied for $q_n$. It suffices to show \[\mu\left(\limsup_{n\in \tilde{\mathbb N}} E(n)\right)>0,\]
where $\mu$ is Lebesgue measure on $[0,1)$. Recall that $I$ is the interval for our rotation coding sequence, and let us denote $\beta_1,\beta_2$ as the two endpoints of $I$. Then where $\Phi$ is the canonical projection from $\mathbb R$ to $\R/\Z$, and $\vert \cdot \vert_1$ denotes distance from $0$ in $\R/\Z$, we define sets

\begin{equation}\label{Kaminaga.8}
E_i(n)=\{\beta\vert \min_{1\leq j\leq q_n} \vert \Phi(j\theta)+\beta-\beta_i\vert_1>\vert q_n\theta-p_n\vert\}, i=1,2.
\end{equation}

We can calculate that

\begin{equation}\label{Kaminaga.9}
\vert (\Phi((j+q_n)\theta)+\beta)-(\Phi(j\theta)+\beta)\vert_1=\vert q_n \theta-p_n\vert,
\end{equation}
 By (\ref{Kaminaga.8}) and (\ref{Kaminaga.9}) we then have $E_1(n)\cap E_2(n)\subset E(n)$. After some calculations, this gets us 
 \[ \mu(E(n))\geq 1- 4\frac{q_n}{q_{n+1}},\]
 and therefore
 
\[ \limsup_{n\in \tilde{\mathbb N}}\mu(E(n))\geq 1-\frac{4}{\limsup_{n\in\tilde{\mathbb N}}\frac{q_{n+1}}{q_n}}.\]
And so with (\ref{lemma4}) and the hypothesis that $\limsup_{n\in \tilde{\mathbb N}} a_n\geq 5$, we are done.
 \end{proof}
 \begin{proof}[Proof of Theorem \ref{Kaminaga1}]
 We repeat the steps of the previous proof, but this time we set $\beta_1=1-\theta$ and $\beta_2=0$. We then perform similar computations to obtain
 \begin{equation}\label{mu(E(n))}
 \mu(E(n))\geq 1-2(q_n+1)\vert q_n\theta-p_n\vert,
 \end{equation}
 and then, using (5) from \cite{Kaminaga}  
 \[\mu(E(n))\geq 1-\frac{2q_n+2}{q_{n+1}}.\]
 
We can then conclude that $\limsup_{n\in\tilde{\mathbb N}}\mu(E(n))$ is positive if we apply (\ref{lemma4}) and the hypothesis $\limsup_{n\in \tilde{\mathbb N}} a_n\geq 3$.
 \end{proof}

\begin{proof}[Proof of Theorem \ref{Kaminaga0}]
By (7) of \cite{Kaminaga}, 
\[\lim_{n\to\infty} \frac{q_{n+1}}{q_n}=\frac{2}{\sqrt{5}-1}.\]
But this certainly implies
\[\limsup_{n\in\tilde{\mathbb N}} \frac{q_{n+1}}{q_n}=\frac{2}{\sqrt{5}-1}.\]

Using this equation to replace (7) (that is, (7) in their paper, not ours) in the proof of Lemma 5 in \cite{Kaminaga}, we can assert that 
\[\limsup_{n\in\tilde{\mathbb N}} q_n\vert q_n\theta-p_n\vert=\frac{1}{\sqrt{5}}.\]
We plug this in to (\ref{mu(E(n))}), and then we are done.
\end{proof}
As a final remark, we will note that \cite{DKL} proved singular continuous spectrum for Schr\"odinger operators for all $\beta$ and without any restrictions on the $a_n$'s. Unfortunately, there are at least two obstacles to applying that argument in our setting directly. First, our Gordon lemma for the CMV operator only works for even jumps, and so to apply the argument in \cite{DKL} directly we would require consecutive $q_n$ to be even, which is impossible. Furthermore, we lack a Floquet theory for solutions of odd-periodic Gesztesy-Zinchenko transfer matrices, which we require to establish trace bounds. 
\end{section}

\bibliographystyle{alpha}   
\bibliography{mybib}

\begin{thebibliography}{CGMV10}

\bibitem[ADZ13]{Avila-Damanik-Zhang}
Artur Avila, David Damanik, and Zhenghe Zhang.
\newblock Singular density of states measure for subshift and quasi-periodic
  {S}chr\"odinger operators.
\newblock arXiv:1304.0519, 2013.

\bibitem[BBG91]{BBG}
J.~Bellisard, A.~Bovier, and J.-M. Ghez.
\newblock Spectral properties of a tight-binding {H}amiltonian with period
  doubling potential.
\newblock {\em Communications in Mathematical Physics}, 135:379--399, 1991.

\bibitem[Bel90]{Bellisard}
J.~Bellisard.
\newblock Spectral properties of {S}chr\"odinger's operator with a
  {T}hue-{M}orse potential.
\newblock In P.~Waldshmidt~M. Luck, J.M.-Moussa, editor, {\em From number
  theory to physics}, volume~47 of {\em Springer Proceedings in Physics}.
  Springer, Berlin, Heidelberg, New York, 1990.

\bibitem[BIST89]{BIST}
J.~Bellissard, B.~Iochum, E.~Scoppola, and D.~Testard.
\newblock Spectral properties of one-dimensional quasicrystals.
\newblock {\em Communications in Mathematical Physics}, 125:527--543, 1989.

\bibitem[CFKS08]{Cycon}
Hans Cycon, Richard Froese, Werner Kirsch, and Barry Simon.
\newblock {\em Schr\"odinger {O}perators with Applications to quantum mechanics
  and global geometry}.
\newblock Springer, 2008.

\bibitem[CGMV10]{CMVG}
Mar\'ia-Jos\'e Cantero, F.~Alberto Gr\"unbaum, Leandro Moral, and Luis
  Vel\'azquez.
\newblock Matrix-valued {S}zeg\"o polynomials and quantum random walks.
\newblock {\em Communications in {P}ure and {A}pplied {M}athematics},
  LXIII:0464--0507, 2010.

\bibitem[CMV03]{CMV}
Mar\'ia-Jos\'e Cantero, Leandro Moral, and Luis Vel\'azquez.
\newblock Five-diagonal matrices and zeros of orthogonal polynomials on the
  unit circle.
\newblock {\em Linear Algebra and its Applications}, 362:29--56, 2003.

\bibitem[Dah11]{Dahl}
Janine Dahl.
\newblock {\em The spectrum of the off-diagonal {F}ibonacci operator}.
\newblock PhD thesis, Rice University, 2011.

\bibitem[Dam00]{Damanik.Gordon}
David Damanik.
\newblock Gordon-type arguments in the spectral theory of one-dimensional
  quasicrystals.
\newblock In M.~Baake and R.~V. Moody, editors, {\em Directions in Mathematical
  Quasicrystals}, volume~13 of {\em CRM Monograph Series}. American
  Mathematical Society, Providence, 2000.

\bibitem[Dam01]{Damanik.doubling}
{D}avid Damanik.
\newblock Uniform singular continuous spectrum for the period doubling
  hamiltonian.
\newblock {\em Annales Henri Poincar\'e}, 2:101--108, 2001.

\bibitem[Dam07]{Damanik.subshiftexpo}
{D}avid Damanik.
\newblock Strictly ergodic subshifts and associated operators.
\newblock In Fritz Gesztesy, Percy Deift, Cherie Galvez, Peter Perry, and
  Wilhelm Schlag, editors, {\em Spectral theory and mathematical physics: a
  Festschrift in honor of Barry Simon's 60th birthday}, volume 76.2 of {\em
  Proceedings of Symposia in Pure Mathematics}. American Mathematical Society,
  2007.

\bibitem[DEG12]{Damanik-Embree-Gorodetski}
David Damanik, Mark Embree, and Anton Gorodetski.
\newblock Spectral properties of {S}chr\"odinger operators arising in the study
  of quasicrystals.
\newblock arXiv:1210.5753, 2012.

\bibitem[DG11]{Damanik-Gorodetski}
{D}avid Damanik and Anton Gorodetski.
\newblock Spectral and quantum dynamical properties of the weakly coupled
  {F}ibonacci {H}amiltonian.
\newblock {\em Communications in {M}athematical {P}hysics}, 305:221--277, 2011.

\bibitem[DKL00]{DKL}
{D}avid Damanik, {R}owan {K}illip, and Daniel {L}enz.
\newblock Uniform spectral properties of one-dimensional quasicrystals,
  {I}{I}{I}. $\alpha$-continuity.
\newblock {\em Communications in {M}athematical {P}hysics}, 212:191--204, 2000.

\bibitem[DL07]{DL_Szego}
{D}avid Damanik and Daniel {L}enz.
\newblock Uniform {S}zeg{\H o} cocycles over strictly ergodic subshifts.
\newblock {\em Journal of Approximation Theory}, 144:133--138, 2007.

\bibitem[DMY13a]{DMY1}
David Damanik, Paul Munger, and William Yessen.
\newblock Orthogonal polynomials on the unit circle with {F}ibonacci
  {V}erblunsky coefficients, {I}. {T}he essential support of the measure.
\newblock {\em Journal of Approximation Theory}, 173:56--88, 2013.

\bibitem[DMY13b]{DMY2}
David Damanik, Paul Munger, and William Yessen.
\newblock Orthogonal polynomials on the unit circle with {F}ibonacci
  {V}erblunsky coefficients, {I}{I}. {A}pplications.
\newblock {\em Journal of Statistical Physics}, 153(2):339--362, 2013.

\bibitem[Gor76]{Gordon}
Alexander Gordon.
\newblock The point spectrum of the one-dimensional {S}chr\"odinger operator.
\newblock {\em Uspekhi {M}at. {N}auk}, 31:4(190):257--258, 1976.

\bibitem[GZ06]{GZ}
Fritz Gesztesy and Maxim Zinchenko.
\newblock Weyl{-}{T}itchmarsh theory for {C}{M}{V} operators associated with
  orthogonal polynomials on the unit circle.
\newblock {\em Journal of Approximation Theory}, 139:172--213, 2006.

\bibitem[Kam96]{Kaminaga}
Masahiro Kaminaga.
\newblock Absence of point spectrum for a class of discrete {S}chr\"odinger
  operators with quasiperiodic potential.
\newblock {\em Forum Mathematicum}, 8:63--69, 1996.

\bibitem[Ong12]{Ong-LP}
Darren~C. Ong.
\newblock Limit-periodic {V}erblunsky coefficients for orthogonal polynomials
  on the unit circle.
\newblock {\em Journal of Mathematical Analysis and Applications}, 394(2):633
  -- 644, 2012.

\bibitem[Ong13]{Ong-QP}
Darren~C. Ong.
\newblock Orthogonal polynomials on the unit circle with quasiperiodic
  {V}erblunsky coefficients have generic purely singular continuous spectrum.
\newblock {\em Discrete and Continuous Dynamical Systems}, pages 605--609,
  2013.
\newblock {P}roceedings of the 9th {A}{I}{M}{S} {C}onference Special Issue.

\bibitem[Pey95]{Peyriere}
J.~Peyri\'ere.
\newblock Trace maps.
\newblock In Fran\c{c}oise Axel and Denis Gratias, editors, {\em Beyond
  Quasicrystals}. Springer, 1995.

\bibitem[Sim04]{Simon}
Barry Simon.
\newblock {\em Orthogonal {P}olynomaials on the {U}nit {C}ircle}.
\newblock American {M}athematical {S}ociety, 2004.

\bibitem[S{\H u}t87]{Suto87}
Andr\'as S{\H u}to.
\newblock The spectrum of a quasiperiodic {S}chr\"odinger operator.
\newblock {\em Communications in Mathematical Physics}, 111:409--415, 1987.

\bibitem[S{\H u}t89]{Suto89}
Andr\'as S{\H u}to.
\newblock Singular continuous spectrum on a set of zero {L}ebesgue measure for
  the {F}ibonacci {H}amiltonian.
\newblock {\em Journal of Statistical Physics}, 56:525--521, 1989.

\bibitem[S{\H u}t95]{Suto95}
Andr\'as S{\H u}to.
\newblock Schr\"odinger difference equation with deterministic ergodic
  potentials.
\newblock In Fran\c{c}oise Axel and Denis Gratias, editors, {\em Beyond
  Quasicrystals}. Springer, 1995.

\bibitem[Yes13]{Yessen.tridiagonal}
William Yessen.
\newblock Spectral analysis of tridiagonal fibonacci hamiltonians.
\newblock {\em Journal of Spectral Theory}, 3(1):101--128, 2013.

\end{thebibliography}
\end{document}